\begin{document}

\title{Generalized SOR iterative method for a class of complex symmetric
linear system of equations %\thanks{Grants or other notes
%about the article that should go on the front page should be
%placed here. General acknowledgments should be placed at the end of the article.}
}
%\subtitle{On the solution of a class of fuzzy system of linear equations}

%\titlerunning{An iterative method for a class of complex symmetric
%linear system of equations}        % if too long for running head

\author{Davod Khojasteh Salkuyeh \and Davod Hezari \and Vahid Edalatpour}

\authorrunning{D. K. Salkuyeh, D. Hezari and V. Edalatpour} % if too long for running head

\institute{D. K. Salkuyeh \at
              Faculty of Mathematical Sciences, University of Guilan, Rasht, Iran\\
%              Tel.: +98-131-3233901\\
%              Fax:  +98-131-3233509\\
              \email{khojasteh@guilan.ac.ir, salkuyeh@gmail.com}             \\[2mm]
%V. Edalatpour \at
            %  Faculty of Mathematical Sciences, University of Guilan, Rasht, Iran\\
             % \email{vedalat.math@gmail.com}\\[2mm]
D. Hezari \at
              Faculty of Mathematical Sciences, University of Guilan, Rasht, Iran\\
              \email{hezari\_h@yahoo.com}\\[2mm]
V. Edalatpour \at
              Faculty of Mathematical Sciences, University of Guilan, Rasht, Iran\\
              \email{vedalat.math@gmail.com}
%             \emph{Present address:} of F. Author  %  if needed
}

\date{Received: date / Accepted: date}
% The correct dates will be entered by the editor

\maketitle

\begin{abstract}

In this paper, to solve a broad class of complex symmetric linear systems, we recast the complex system in a real formulation and apply the generalized successive overrelaxation (GSOR) iterative method to the equivalent real system. We then investigate its convergence properties and determine its optimal iteration parameter as well as its corresponding optimal convergence factor. In addition, the resulting GSOR preconditioner is used to preconditioned Krylov subspace methods such as GMRES for solving the real equivalent formulation of the system. Finally, we give some numerical experiments to validate the theoretical results and compare the performance of the GSOR method  with the modified Hermitian and skew-Hermitian splitting (MHSS) iteration.

\keywords{ complex linear systems \and symmetric positive
definite \and SOR \and HSS \and MHSS}
% \PACS{PACS code1 \and PACS code2 \and more}
 \subclass{65F10 \and 65F15.}
\end{abstract}

\section{Introduction}
\label{SEC1}

Consider the system of linear equations
\begin{equation}\label{1.1}
Au= b, \qquad  A\in\mathbb{C}^{n\times n},\quad
u,b\in\mathbb{C}^{n},
\end{equation}
where $A$ is a complex symmetric matrix of the form
\begin{equation}\label{1.2}
A=W+iT, \hspace{.3cm} (i=\sqrt{-1})
\end{equation}
and $W,T\in\mathbb{R}^{n\times n}$ are symmetric matrices with at least one of them being positive
definite. Here, we mention that the assumptions of  \cite{Bai-G,MHSS} are stronger than ours, where the author assumed that both of the matrices $W$ and $T$ are symmetric positive semidefinite matrices with at least one of them being positive definite.
Hereafter, without lose of generality, we assume that $W$ is symmetric positive definite. Such systems arise in many problems in scientific
computing and engineering such as FFT-based solution of certain time-dependent PDEs \cite{Bertaccini}, structural dynamics
\cite{Feriani}, diffuse optical tomography \cite{Arridge}, quantum mechanics \cite{Dijk} and molecular scattering
\cite{Poirier}. For more applications of this class of complex symmetric systems, the reader is referred to \cite{Benzi-B} and  references
therein.

Bai et al. in \cite{Bai-G}  presented the Hermitian/skew-Hermitian splitting (HSS) method to solve non-Hermitian positive definite system of linear equations. After that, this method gains people's attention and proposed different variants of the method. Benzi and Gloub in \cite{Benzi-Golub-SIMAX} and Bai et al. in \cite{Bai-Golub-Pan} have applied the HSS method to solve saddle point problem or as a precondtioner. The normal/skew-Hermitian splitting (NSS) method has been presented by Bai et al. in \cite{Bai-Golub-Ng}. Moreover, Bai et al. \cite{PSS} have presented the positive definite and skew-Hermitian splitting (PSS) method to solve positive definite system of linear equations. Lopsided version of the HSS (LHSS) method has been presented by Li et al. in \cite{LHSS}. More recently Benzi in \cite{GHSS} proposed a generalization of the HSS method to solve positive definite system of linear equation.

We observe that the matrix $A$ naturally possesses a Hermitian/skew-Hermitian (HS) splitting
\begin{equation}\label{1.3}
A=H+S,
\end{equation}
where
\[
H=\frac{1}{2}(A+A^*)=W \quad {\rm and} \quad S=\frac{1}{2}(A-A^*)=iT,
\]
with $A^*$ being the conjugate transpose of $A$. Based on the HS splitting (\ref{1.3}), the HSS method to solve (\ref{1.1}) can be written as:

\medskip

\noindent{\it \textbf{The HSS iteration method}:}  \emph{Given an initial guess} $u^{(0)}
\in \mathbb{C}^{n}$ \emph{and positive constant}  $\alpha$, \emph{for} $k=0, 1, 2 \ldots,$ \emph{until} \{$u^{(k)}$\}
\emph{converges, compute}
\begin{equation}\label{1.4}
\left\{
  \begin{array}{ll}
    (\alpha I+W)u^{(k+\frac{1}{2})}=(\alpha I-iT)u^{(k)}+b, & \\
   (\alpha I+iT)u^{(k+1)}=(\alpha I-W)u^{(k+\frac{1}{2})}+b, &
  \end{array}
\right.
\end{equation}
\emph{where $I$ is identity matrix.}

\medskip

Since $W\in \mathbb{R}^{n \times n}$ is symmetric
positive definite, we know from \cite{Bai-G} that if $T$ is positive semidefinite then the HSS
iterative method is convergent for any positive constant $\alpha$.
In each iteration of the HSS method to solve (\ref{1.1}) two
sub-systems should be solved. Since  $\alpha I+W$ is  symmetric
positive definite, the first sub-system can be solved exactly by
the Cholesky factorization of the coefficient matrix and in the
inexact version, by the conjugate gradient (CG) method.  In the
second half-step of an iteration  we need to solve a shifted
skew-Hermitian sub-system in each iteration. This sub-system can
be solved by a variant of Bunch-Parlett factorization
\cite{Golub-V} or inexactly by a Krylov subspace iteration scheme
such as the GMRES method \cite{Saad,GMRES} which is the main problem of
HSS method.

Bai et al. \cite{MHSS} recently have presented the following
modified Hermitian and skew-Hermitian splitting (MHSS) method to
iteratively compute a reliable and accurate approximate solution
for the system of linear equations (\ref{1.1}). This method may be run as following:

\medskip

\noindent{\it \textbf{The MHSS iteration method:}}  \emph{Given an initial guess $u^{(0)}
\in \mathbb{C}^{n}$ and positive constant $\alpha$, for $k=0, 1, 2 \ldots,$ until \{$u^{(k)}$\}
converges, compute}
\begin{equation}\label{1.5}
\left \{\begin{array}{ll}
(\alpha I+W)u^{(k+\frac{1}{2})}=(\alpha I-iT)u^{(k)}+b,\\
(\alpha I+T)u^{(k+1)}=(\alpha I+iW)u^{(k+\frac{1}{2})}-ib.
\end{array}\right.
\end{equation}

\medskip

In \cite{MHSS}, it has been shown that if $T$ is symmetric positive semidefinite then the MHSS iterative method is convergent for any positive constant $\alpha$. Obviously both of the matrices $\alpha I+W$ and $\alpha I+T$ are symmetric positive definite. Therefore, the two sub-systems involved in each step of the MHSS
iteration can be solved effectively  by using the Cholesky factorization of the matrices  $\alpha I+W$ and $\alpha I+T$. Moreover, to solve both of the sub-systems  in the inexact variant of the MHSS method one can use a preconditioned conjugate gradient scheme. It is necessary to mention that the right hand
side of the sub-systems are still complex and the MHSS method involves complex arithmetic. Numerical results presented in \cite{MHSS} show that the MHSS method in general is more effective than the HSS, GMRES, GMRES(10) and GMRES(20) methods in terms of both iteration count and CPU time. More recently Bai et al. in \cite{PMHSS} proposed a preconditioned version of the MHSS method.

The well-known successive overrelaxation (SOR) method is a basic iterative method which is popular in engineering  and science applications. For example, it has been used to solve augmented linear systems \cite{SORlike,TSORlike,MSORlike} or as a preconditioner \cite{SORP1,SORP2}. In \cite{SSORlike}, Zheng et al. applied the symmetric SOR-like method to solve saddle point problems. In \cite{GSOR}, Bai et al. have presented a generalization of the SOR method to solve augmented linear systems.

In this paper, by equivalently  recasting the complex system of
linear equations (\ref{1.1}) in a  2-by-2 block real  linear
system, we define the generalized successive overrelaxation
(GSOR) iterative method to solve the equivalent real system.
Then, for the GSOR method, convergence conditions are derived and
determined its optimal iteration parameter and corresponding
optimal convergence factor. Besides its use as a solver, the GSOR
iteration is also used as a preconditioner to accelerate Krylov
subspace methods such as GMRES. Numerical experiments which use
GSOR as a preconditioner to GMRES, show a well-clustered spectrum
(away from zero) that usually lead to speedy convergence of the
preconditioned iteration. In the new method two sub-systems with
coefficient matrix $W$ should be solved which can be done by the
Cholesky factorization or inexactly by the CG algorithm.
Moreover, the right-hand side of the sub-systems are real.
Therefore, the solution of the system can be obtained by the real
version of the algorithms.

The rest of the paper is organized as follows. In Section 2 we
propose our method and investigate its convergence properties.
Section 3 is devoted to some numerical experiments to show the
effectiveness of the GSOR iteration method as well as the
corresponding GSOR preconditioner. Finally, in Section 4, some
concluding remarks are given .

\section{THE NEW METHOD} \label{SEC3}

Let $u=x+iy$ and  $b=p+iq$ where  $x,y,p,q\in \mathbb{R}^{n}$. In
this case the complex linear system (\ref{1.1}) can be written as
2-by-2 block real equivalent formulation
\begin{equation}\label{1.6}
\mathcal{A}\pmatrix{x \cr y}=\pmatrix{p \cr q},
\end{equation}
where
\[
\mathcal{A}=\pmatrix{W & -T \cr T & W}.
\]
We split the coefficient matrix of  (\ref{1.6})  as
\[
\mathcal{A}=\mathcal{D}-\mathcal{E}-\mathcal{F},
\]
where
\[
\mathcal{D}=\pmatrix{W & 0 \cr 0 & W},\quad  \mathcal{E}=\pmatrix{0 & 0 \cr -T & 0} \quad {\rm and}  \quad \mathcal{F}=\pmatrix{0 & T \cr 0 &
0}.
\]
In this case the GSOR iterative method to solve (\ref{1.6}) can
be written as
\begin{equation}\label{1.7}
\pmatrix{x^{k+1} \cr y^{k+1}}=\mathcal{G}_{\alpha}\pmatrix{x^{k} \cr
y^{k}}+ \alpha (\mathcal{D}-\alpha \mathcal{E})^{-1}\pmatrix{p \cr q},
\end{equation}
where $0\neq \alpha \in \mathbb{R}$ and
\begin{eqnarray*}
\mathcal{G}_{\alpha}&=&(\mathcal{D}-\alpha \mathcal{E})^{-1}((1-\alpha)\mathcal{D}+\alpha \mathcal{F})\\
&=& \pmatrix{W & 0 \cr \alpha T & W}^{-1}\pmatrix{(1-\alpha)W & \alpha T \cr 0 & (1-\alpha)W}\\
&=& \pmatrix{I & 0 \cr \alpha S & I}^{-1}\pmatrix{(1-\alpha)I & \alpha S \cr 0 & (1-\alpha)I},\\
\end{eqnarray*}
wherein  $S=W^{-1}T$. It is easy to see that (\ref{1.7}) is equivalent to
\begin{equation}\label{1.8}
\left \{\begin{array}{ll} Wx^{(k+1)}=(1-\alpha)Wx^{(k)} + \alpha Ty^{(k)}+ \alpha p,\\
Wy^{(k+1)}=-\alpha Tx^{(k+1)} + (1-\alpha)Wy ^{(k)}+ \alpha q,
\end{array}\right.
\end{equation}
where $x^{(0)}$ and $y^{(0)}$ are initial approximations for $x$ and  $y$, respectively. As we mentioned, the iterative method (\ref{1.8}) is real valued  and the coefficient matrix of both of the sub-systems is $W$.
Since the coefficient matrix ${\cal A}$ is a block two-by-two matrix, then one may imagine that the proposed method is a kind of the block-SOR method (see \cite{HadjiSOR,SongSOR,YoungSOR}). If we introduce
\[
\mathcal{M}_{\alpha}=\frac{1}{\alpha}(\mathcal{D}-\alpha \mathcal{E})\quad {\rm and} \quad \mathcal{N}_{\alpha}=\frac{1}{\alpha}((1-\alpha)\mathcal{D}+\alpha \mathcal{F}),
\]
then it holds that
\[
\mathcal{A}=\mathcal{M}_{\alpha}-\mathcal{N}_{\alpha}\quad {\rm and} \quad \mathcal{G}_\alpha=\mathcal{M}_{\alpha}^{-1} \mathcal{N}_{\alpha}.
\]
Therefore, GSOR is a stationary iterative method obtained by the matrix splitting
$\mathcal{A}=\mathcal{M}_{\alpha}-\mathcal{N}_{\alpha}$. Hence, we deduce that
the matrix $\mathcal{M}_\alpha$ can be used as preconditioner for the system (\ref{1.6}).
Note that, the multiplicative factor $1/\alpha$ can be dropped since it has no effect on
the preconditioned system. Therefore the preconditioned system takes the following form
\begin{equation}\label{1.9}
\mathcal{P}_{\alpha}^{-1}\mathcal{A}\pmatrix{x \cr y}=\mathcal{P}_{\alpha}^{-1}\pmatrix{p \cr q},
\end{equation}
where $\mathcal{P}_\alpha=\mathcal{D}-\alpha \mathcal{E}$. In the
sequel,  matrix $\mathcal{P}_\alpha$ will be referred to as the
GSOR preconditioner. Krylov subspace methods such as the GMRES
algorithm to solve (\ref{1.9}) involve only matrix-vector
multiplication of the form
\[
\pmatrix{e \cr f}=\mathcal{P}_{\alpha}^{-1}\mathcal{A}\pmatrix{r \cr s},
\]
which can be done in four steps by the following procedure
\begin{enumerate}
\item $t:=Wr-Ts$;
\item $u:=Tr+Ws$;
\item Solve $We=t$ for $e$;
\item Solve $Wf=u-\alpha Te$ for $f$.
\end{enumerate}
In the steps 3 and 4 of the above procedure one can use the Cholesky factorization of the matrix $W$.

In continuation, we investigate the convergence of the proposed method.
 \begin{lemma}\label{l1}
(\cite{Axelson}) Both roots of the real quadratic equation $x^{2}-rx+s=0$ are less than one in modulus if and only if $|s|<1$
and $|r|<1+s$.
 \end{lemma}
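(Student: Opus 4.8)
The plan is to write $f(x)=x^{2}-rx+s=(x-x_1)(x-x_2)$, where $x_1,x_2$ are the two roots, so that Vieta's formulas give $x_1+x_2=r$ and $x_1x_2=s$. The analysis naturally splits according to the sign of the discriminant $\Delta=r^{2}-4s$. The one computation worth isolating in advance is that $f(1)=1-r+s$ and $f(-1)=1+r+s$, so that the inequalities $f(1)>0$ and $f(-1)>0$ hold simultaneously if and only if $|r|<1+s$. Thus the whole lemma reduces to showing that ``$|x_1|<1$ and $|x_2|<1$'' is equivalent to ``$|s|<1$, $f(1)>0$, and $f(-1)>0$''.

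For the forward direction, assume $|x_1|<1$ and $|x_2|<1$. Then $|s|=|x_1|\,|x_2|<1$ at once. If $\Delta<0$ the roots form a conjugate pair, so $x_2=\bar{x_1}$ and $f(\pm 1)=(\pm1-x_1)(\pm1-\bar{x_1})=|\pm1-x_1|^{2}>0$, the inequality being strict because $|x_1|<1$ excludes $x_1=\pm1$. If $\Delta\ge 0$ the roots are real and lie in $(-1,1)$, so $1-x_j>0$ and $1+x_j>0$ for $j=1,2$, whence $f(1)=(1-x_1)(1-x_2)>0$ and $f(-1)=(1+x_1)(1+x_2)>0$. In either case $|r|<1+s$ follows.

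For the converse, assume $|s|<1$ and $|r|<1+s$; note the latter forces $1+s>0$. If $\Delta<0$, the roots are a conjugate pair, so $|x_1|^{2}=|x_2|^{2}=x_1x_2=s$, and $s<1$ gives $|x_j|<1$. If $\Delta\ge0$, the roots are real; since $|r|<1+s$ we have $f(1)>0$ and $f(-1)>0$, so for the upward-opening parabola $f$ neither $1$ nor $-1$ lies in the closed interval between the roots. Hence either both roots lie in $(-1,1)$, or both exceed $1$, or both are less than $-1$. The latter two are impossible: if both roots exceed $1$ then the vertex abscissa $r/2=(x_1+x_2)/2>1$, contradicting $r\le|r|<1+s<2$ (using $s<1$); symmetrically both roots less than $-1$ would force $r<-2$, contradicting $|r|<2$. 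Therefore both roots lie in $(-1,1)$.

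The only step that requires a little care is the last one — excluding the configuration in which both real roots fall on the same side of the unit interval. This is precisely where the hypothesis $|s|<1$ is needed: combined with $|r|<1+s$ it confines the vertex $r/2$ strictly inside $(-1,1)$, and once $f(\pm1)>0$ is known this traps both roots inside $(-1,1)$. Since this is the classical Schur--Cohn (Jury) criterion for a quadratic and the argument above is elementary, I would either carry out this two-case split in full or simply cite \cite{Axelson} as the authors do.
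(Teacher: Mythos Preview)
Your proof is correct. The paper itself does not prove this lemma at all; it simply states it with a citation to Axelsson's book, so there is no ``paper's own proof'' to compare against. What you have supplied is a clean, self-contained derivation of the classical Schur--Cohn (Jury) criterion for a real quadratic: Vieta's relations, the observation that $f(\pm1)>0$ is equivalent to $|r|<1+s$, and a case split on the sign of the discriminant. The only place one must be careful --- ruling out both real roots lying on the same side of $[-1,1]$ --- you handle correctly by noting that $|s|<1$ together with $|r|<1+s$ forces $|r|<2$, which pins the vertex $r/2$ inside $(-1,1)$. Either including this argument or, as you suggest, simply citing \cite{Axelson} is fine; the paper chose the latter.
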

\begin{lemma}\label{l2}
Let $W,T \in \mathbb{R}^{n \times n} $ be symmetric positive definite and symmetric, respectively. Then, the eigenvalues of
the matrix $S=W^{-1}T$  are all real.
\end{lemma}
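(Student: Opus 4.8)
The plan is to exploit the symmetry of $W$ and reduce the eigenvalue problem for $S=W^{-1}T$ to a symmetric one by a similarity transformation. Since $W$ is symmetric positive definite, it possesses a symmetric positive definite square root $W^{1/2}$ (obtainable from the spectral decomposition of $W$), and $W^{1/2}$ is invertible with inverse $W^{-1/2}$. I would then observe that
\[
W^{1/2} S W^{-1/2} = W^{1/2} W^{-1} T W^{-1/2} = W^{-1/2} T W^{-1/2},
\]
so $S$ is similar to the matrix $\widehat{S} := W^{-1/2} T W^{-1/2}$.

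The key point is that $\widehat{S}$ is a real symmetric matrix: since $T$ is symmetric and $W^{-1/2}$ is symmetric, we have $\widehat{S}^{\top} = (W^{-1/2})^{\top} T^{\top} (W^{-1/2})^{\top} = W^{-1/2} T W^{-1/2} = \widehat{S}$. By the spectral theorem, every eigenvalue of a real symmetric matrix is real, so the eigenvalues of $\widehat{S}$ are all real. Because similar matrices share the same eigenvalues (with the same algebraic multiplicities), the eigenvalues of $S = W^{-1}T$ coincide with those of $\widehat{S}$ and are therefore all real, which is the desired conclusion.

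I do not anticipate a serious obstacle here; the argument is essentially the standard trick for symmetrizing a generalized eigenvalue problem $Tx = \lambda W x$. The only point requiring a word of care is the existence and basic properties of $W^{1/2}$ (symmetry, positive definiteness, invertibility), but these follow immediately from diagonalizing $W$ as $W = Q \Lambda Q^{\top}$ with $Q$ orthogonal and $\Lambda$ diagonal with positive entries, and setting $W^{1/2} = Q \Lambda^{1/2} Q^{\top}$. One could alternatively phrase the whole argument in terms of the generalized eigenproblem $Tx=\lambda Wx$ and the fact that it has real eigenvalues when $W$ is symmetric positive definite and $T$ symmetric, but the similarity-transformation version is the cleanest to write out.
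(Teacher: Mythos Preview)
Your proof is correct and follows essentially the same approach as the paper: the paper also takes the symmetric positive definite square root $R$ of $W$ (so $W=R^{2}$), computes $RSR^{-1}=R^{-1}TR^{-1}$, observes that this matrix is symmetric, and concludes by similarity that the eigenvalues of $S$ are real. The only differences are notational, and your version adds a bit more justification for the existence and symmetry of the square root.
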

\begin{proof}
Since $W$ is a symmetric positive definite matrix, there is a symmetric positive definite matrix $R$ such that
 $W=R^{2}$ (see \cite{Golub-V}, page $149$). Therefore, we have
 \[
 RSR^{-1}=R^{-1}TR^{-1}=R^{-T}TR^{-1}:=Z.
 \]
This shows that $S$ is similar to $Z$. On the other hand $Z$ is symmetric and  therefore, the eigenvalues of $S$  are real. \qquad $\Box$
\end{proof}

The following theorem presents a necessary and sufficient condition for guaranteeing the convergence of the GSOR method.

 \begin{theorem}\label{t1}
Let $W,T \in \mathbb{R}^{n \times n} $ be symmetric positive definite and symmetric, respectively. Then, the GSOR method to solve Eq. (\ref{1.6}) is convergent if and only if
\[
0<\alpha<\frac{2}{1+\rho(S)},
\]
where $S=W^{-1}T$ and $\rho(S)$ is the spectral radius of $S$.
 \end{theorem}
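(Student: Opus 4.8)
The plan is to analyze the spectrum of the iteration matrix $\mathcal{G}_\alpha$ and require $\rho(\mathcal{G}_\alpha)<1$. First I would observe that, using the factored form
\[
\mathcal{G}_\alpha = \pmatrix{I & 0 \cr \alpha S & I}^{-1}\pmatrix{(1-\alpha)I & \alpha S \cr 0 & (1-\alpha)I},
\]
an eigenpair $(\lambda, (u^\top, v^\top)^\top)$ of $\mathcal{G}_\alpha$ satisfies
\[
\pmatrix{(1-\alpha)I & \alpha S \cr 0 & (1-\alpha)I}\pmatrix{u \cr v} = \lambda \pmatrix{I & 0 \cr \alpha S & I}\pmatrix{u \cr v},
\]
i.e. $(1-\alpha)u + \alpha S v = \lambda u$ and $(1-\alpha)v = \lambda \alpha S u + \lambda v$. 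By Lemma \ref{l2} the eigenvalues $\mu$ of $S$ are real, and since $S$ is similar to a symmetric matrix it is diagonalizable; so I would reduce to the scalar situation by simultaneously working in an eigenbasis of $S$ (or, more carefully, argue that each eigenvalue $\lambda$ of $\mathcal{G}_\alpha$ is linked to some eigenvalue $\mu$ of $S$). Eliminating $u$ and $v$ yields the quadratic
\[
\lambda^2 - \big(2(1-\alpha) - \alpha^2\mu^2\big)\lambda + (1-\alpha)^2 = 0
\]
for each eigenvalue $\mu$ of $S$.

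Next I would invoke Lemma \ref{l1} with $r = 2(1-\alpha) - \alpha^2\mu^2$ and $s = (1-\alpha)^2$: both roots have modulus less than one iff $|s|<1$ and $|r|<1+s$. The condition $|(1-\alpha)^2|<1$ gives $0<\alpha<2$. The condition $|r|<1+s$ splits into $r<1+s$ and $r>-(1+s)$. The first, $2(1-\alpha)-\alpha^2\mu^2 < 1+(1-\alpha)^2$, simplifies (the $\alpha^2$ terms combine) to $-\alpha^2\mu^2 < \alpha^2$, which holds automatically for $\alpha\neq 0$. The second, $2(1-\alpha)-\alpha^2\mu^2 > -1-(1-\alpha)^2$, rearranges to $\alpha^2\mu^2 < (2-\alpha)^2$, i.e. $\alpha|\mu| < 2-\alpha$ (valid since $0<\alpha<2$ forces $2-\alpha>0$), i.e. $\alpha(1+|\mu|) < 2$, i.e. $\alpha < 2/(1+|\mu|)$.

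Then I would take the intersection over all eigenvalues $\mu$ of $S$: the binding constraint is the one with $|\mu|$ largest, namely $|\mu| = \rho(S)$ (here I use that the eigenvalues of $S$ are real, so $\rho(S) = \max|\mu|$ is itself attained as $|\mu|$ for some eigenvalue). This yields exactly $0<\alpha<\frac{2}{1+\rho(S)}$, and since $\frac{2}{1+\rho(S)}\le 2$, the earlier bound $\alpha<2$ is subsumed. For the converse direction, if $\alpha$ lies outside this interval then for the eigenvalue $\mu$ of $S$ with $|\mu|=\rho(S)$ the quadratic has a root of modulus $\ge 1$, so $\rho(\mathcal{G}_\alpha)\ge 1$ and GSOR does not converge; this gives the ``only if''.

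The main obstacle I anticipate is the reduction from the block matrix $\mathcal{G}_\alpha$ to the family of scalar quadratics: one must be careful that $S$, while not symmetric, is diagonalizable (which follows from Lemma \ref{l2}'s proof, since $S$ is similar to the symmetric matrix $Z$), so that there is a common basis in which the $2\times 2$ blocks decouple; then the spectrum of $\mathcal{G}_\alpha$ is precisely the union of the roots of the quadratics over all eigenvalues $\mu$ of $S$. The rest is the routine algebra of applying Lemma \ref{l1} and simplifying the three resulting inequalities.
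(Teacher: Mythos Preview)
Your proposal is correct and follows essentially the same route as the paper: derive the scalar quadratic
\[
\lambda^{2}+(\alpha^{2}\mu^{2}+2\alpha-2)\lambda+(1-\alpha)^{2}=0
\]
linking eigenvalues $\lambda$ of $\mathcal{G}_\alpha$ to eigenvalues $\mu$ of $S$, apply Lemma~\ref{l1}, and simplify the resulting inequalities to $(2-\alpha)^{2}>\alpha^{2}\mu^{2}$, hence $0<\alpha<2/(1+\rho(S))$. The only technical difference is that the paper eliminates directly from the eigenvector equations (arriving at $(1-\alpha-\lambda)^{2}w=-\lambda\alpha^{2}S^{2}w$), whereas you invoke the diagonalizability of $S$ (similar to a symmetric matrix) to decouple into scalar $2\times 2$ blocks; your route has the mild advantage of making the ``only if'' direction transparent, since it exhibits the spectrum of $\mathcal{G}_\alpha$ as exactly the union of the roots of the quadratics over all $\mu$.
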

 \begin{proof}
 Let $\lambda \neq 0$ be an eigenvalue of $\mathcal{G}_{\alpha}$ corresponding to  the eigenvector $z=(v^{T},w^{T})^T$. Note that for $\lambda = 0$
 there is nothing to investigate. Then, we have
\[
\pmatrix{(1-\alpha)I & \alpha S \cr 0 & (1-\alpha)I}\pmatrix{v
\cr w}  =\lambda\pmatrix{I & 0 \cr \alpha S & I}\pmatrix{v
\cr w},
 \]
or equivalently
\begin{equation}\label{1.10}
\left\{
  \begin{array}{ll}
   \alpha Sw=(\lambda+\alpha-1)v,  \\
   -(\lambda+\alpha-1)w=\lambda \alpha S v.
  \end{array}
\right.
\end{equation}
If $\lambda=1-\alpha$, for convergence of the GSOR method we must have $|1-\alpha|<1$, or equivalently
\begin{equation}\label{1.11}
  0<\alpha<2.
\end{equation}
If $\lambda\neq 1-\alpha$, from (\ref{1.10}), it is easy to verify that
\[
 (1-\alpha-\lambda)^{2}w=-\lambda\alpha^{2}S^{2}w.
\]
 This shows that for every eigenvalue  $\lambda \neq 0$  of  $\mathcal{G}_{\alpha}$  there is an eigenvalue $\mu$  of $S$  that satisfies
\begin{equation}\label{1.12}
  (1-\alpha-\lambda)^{2}=-\lambda\alpha^{2}\mu^{2}.
\end{equation}
Eq. (\ref{1.12}) is equivalent to
\begin{equation}\label{1.13}
  \lambda^{2}+(\alpha^{2}\mu^{2}+2\alpha-2)\lambda+(\alpha-1)^{2}=0.
\end{equation}
According to Lemma \ref{l1}, $|\lambda|<1$  if and only if
\begin{equation}\label{1.14}
\left \{\begin{array}{ll} |\alpha-1|^{2}<1,\\
|\alpha^{2}\mu^{2}+2\alpha-2|<1+(\alpha-1)^{2},
\end{array}\right.
\end{equation}
The first equation in (\ref{1.14}) is equivalent to (\ref{1.11}) and  the second equation in (\ref{1.14}) is equivalent to
$(\alpha-2)^{2}>\alpha^{2}\mu^{2}$. Therefore, according to Lemma \ref{l2}, the latter equation holds if and only if
\[
(\alpha-2)^{2}>\alpha^{2}\rho(S)^{2}.
\]
This relation is equivalent to $|\alpha-2|>\alpha\rho(S)$.
From (\ref{1.11}), it is easy to see  that the latter inequality
holds if and only if
\[
0<\alpha<\frac{2}{1+\rho(S)},
\]
which completes the proof. \qquad $\Box$
\end{proof}

In the next theorem, we obtain the optimal value of the relaxation parameter $\alpha$  which minimizes  the spectral radius
of the iteration matrix of the GSOR method, i.e.,
\[
\rho(\mathcal{G}_{\alpha^{*}})=\min_{0<\alpha<\frac{2}{1+\rho(S)}}\rho(\mathcal{G}_{\alpha}).
\]

\begin{theorem}\label{t2}
 Let $W,T \in \mathbb{R}^{n \times n} $ be symmetric positive definite and symmetric, respectively. Then, the optimal value of the relaxation parameter for the GSOR iterative method (\ref{1.8}) is given by
 \begin{equation}\label{1.15}
\alpha^*=\frac{2}{1+\sqrt{1+\rho^{2}(S)}},
\end{equation}
and the corresponding optimal convergence factor of the method is given by
\begin{equation}\label{1.16}
\rho(G_{\alpha^*})=1-\alpha^*=1-\frac{2}{1+\sqrt{1+\rho(S)^{2}}},
\end{equation}
where $S=W^{-1}T$ and $\rho(S)$ is the spectral radius of $S$.
\end{theorem}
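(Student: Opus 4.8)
The plan is to push further the eigenvalue analysis from the proof of Theorem~\ref{t1}. Every nonzero eigenvalue $\lambda$ of $\mathcal{G}_\alpha$ is a root of the quadratic (\ref{1.13}) for some eigenvalue $\mu$ of $S$, which is real by Lemma~\ref{l2}, and the only other candidate is $\lambda=1-\alpha$. First I would compute the discriminant of (\ref{1.13}) and factor it as
\[
(\alpha^2\mu^2+2\alpha-2)^2-4(\alpha-1)^2=\alpha^2\mu^2\bigl(\alpha^2\mu^2+4\alpha-4\bigr).
\]
If $\alpha^2\mu^2+4\alpha-4\le 0$ the two roots are complex conjugates of common modulus $|1-\alpha|$; if $\alpha^2\mu^2+4\alpha-4>0$ they are real, their product $(\alpha-1)^2$ is $\ge 0$ and (one checks) their sum is $<0$ on the convergence interval, so both are non-positive and the dominant one has modulus
\[
g_\alpha\!\left(\alpha^2\mu^2\right):=\frac{1}{2}\Bigl(\alpha^2\mu^2+2\alpha-2+\sqrt{\alpha^2\mu^2\bigl(\alpha^2\mu^2+4\alpha-4\bigr)}\Bigr).
\]

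Next I would fix $\alpha$ in the convergence interval and show $\rho(\mathcal{G}_\alpha)=g_\alpha(\alpha^2\rho(S)^2)$. Setting $t=\alpha^2\mu^2$, the dominant-root modulus is a function of $t$ alone: it is constant, equal to $1-\alpha$, for $t\le 4-4\alpha$, and a short derivative estimate shows it is increasing for $t\ge 4-4\alpha$; since the two pieces agree at $t=4-4\alpha$, this function is nondecreasing on $[0,\infty)$, and its value at $t=0$ is $|1-\alpha|$, which also accounts for the eigenvalue $\lambda=1-\alpha$. Hence the spectral radius is attained at $\mu^2=\rho(S)^2$; write $\rho:=\rho(S)$ from now on, the case $\rho=0$ being trivial.

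Finally I would minimize $\phi(\alpha):=\rho(\mathcal{G}_\alpha)$ over $\alpha\in(0,2/(1+\rho))$ by splitting at the root of $\alpha^2\rho^2+4\alpha-4=0$; solving this quadratic and rationalizing gives precisely $\alpha^{*}=2/(1+\sqrt{1+\rho^2})$, which lies strictly inside $(0,2/(1+\rho))$ since $\sqrt{1+\rho^2}>\rho$, and satisfies $\alpha^{*}\le 1$. For $\alpha\in(0,\alpha^{*}]$ we have $\phi(\alpha)=1-\alpha$, strictly decreasing, so $\phi(\alpha)>1-\alpha^{*}$ there and $\phi(\alpha^{*})=1-\alpha^{*}$. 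For $\alpha\in[\alpha^{*},2/(1+\rho))$ we are on the real-root branch, so $\phi(\alpha)\ge\frac{1}{2}(\alpha^2\rho^2+2\alpha-2)$; this lower bound is increasing in $\alpha$, and using $(\alpha^{*})^2\rho^2=4-4\alpha^{*}$ it equals $1-\alpha^{*}$ at $\alpha=\alpha^{*}$, so $\phi(\alpha)\ge 1-\alpha^{*}$ on the whole branch. Therefore $\min_\alpha\phi(\alpha)=1-\alpha^{*}$, attained at $\alpha^{*}$, which is exactly (\ref{1.15})--(\ref{1.16}).

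The routine parts are the discriminant factorization and the sign/monotonicity estimates, each a one-variable exercise; the one place to be careful is the sign bookkeeping for the real roots (ensuring the dominant modulus is $\frac{1}{2}(b+\sqrt{b^2-4c})$ with $b=\alpha^2\mu^2+2\alpha-2$ genuinely positive on the relevant range), and the main structural point is to arrange the case split so that the threshold $\alpha^{*}$ appears at once as the crossover from complex to real roots and as the junction between the decreasing branch $1-\alpha$ and the increasing real-root branch.
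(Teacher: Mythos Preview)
Your proof is correct and shares the same analytic backbone as the paper's --- the quadratic \eqref{1.13}, its discriminant factorization $\alpha^2\mu^2(\alpha^2\mu^2+4\alpha-4)$, and the crossover value $\alpha^{*}$ at which the discriminant vanishes --- but the argument is organized differently. The paper fixes an eigenvalue $\mu$ first and uses a geometric tangency picture in the classical SOR style: it rewrites \eqref{1.12} as the intersection of the line $f_\alpha(\lambda)=(\lambda+\alpha-1)/\alpha$ with the curve $g(\lambda)=\pm\mu\sqrt{-\lambda}$, decreases $\alpha$ until tangency to locate $\hat\alpha(\mu)=2/(1+\sqrt{1+\mu^2})$, and only then invokes the envelope $\mu=\rho(S)$. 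You reverse the order: fix $\alpha$, prove monotonicity of the dominant-root modulus in $t=\alpha^2\mu^2$ to reduce at once to $\mu^2=\rho(S)^2$, and then minimize over $\alpha$ via the explicit lower bound $\tfrac12(\alpha^2\rho^2+2\alpha-2)$ on the real-root branch. Your route is purely algebraic and self-contained (no figure needed); the paper's route makes the link to Young's classical SOR analysis more visible. Both arrive at the same $\alpha^{*}$ for the same reason, namely that it is simultaneously the tangency point and the junction between the decreasing branch $1-\alpha$ and the increasing real-root branch.
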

 \begin{proof}
In the proof of Theorem \ref{t1}, we have seen that for every eigenvalue  $\lambda \neq 0$  of  $\mathcal{G}_{\alpha}$  there is an eigenvalue $\mu$  of $S$  that satisfies  (\ref{1.12}). We exploit the roots of this quadratic equation to determine
 the optimal parameter $\alpha^*$. The roots of Eq. (\ref{1.12}) are
\[
\lambda_{1,2}(\alpha)=\frac{-(\alpha^{2}\mu^{2}+2\alpha-2)\pm\sqrt{\Delta}}{2},
\]
where
\[
\Delta=\alpha^{2}\mu^{2}(\alpha^{2}\mu^{2}+4\alpha-4).
\]
From (\ref{1.12}), we can write
\begin{equation}\label{1.17}
\frac{\lambda+\alpha-1}{\alpha}=\pm\mu\sqrt{-\lambda}.
\end{equation}
We define the following functions
\[
f_{\alpha}(\lambda)=\frac{\lambda+\alpha-1}{\alpha} \quad {\rm and} \quad g(\lambda)=\pm \mu\sqrt{-\lambda}.
\]
Then $f_{\alpha}(\lambda)$ is a straight line through the point $(1,1)$, whose slope increases monotonically with
decreasing $\alpha$. It is clear that (\ref{1.17}) can be geometrically interpreted as the intersection of the curves
$f_{\alpha}({\lambda})$ and $g({\lambda})$, as illustrated in Fig. 1.
%%%%%%%%%%%%%%%%%curves of f and g%%%%%%%%%%%%%%%%%%%%%%%%%%
\begin{figure}[!hbp]
\centerline{\includegraphics[height=8cm,width=11cm]{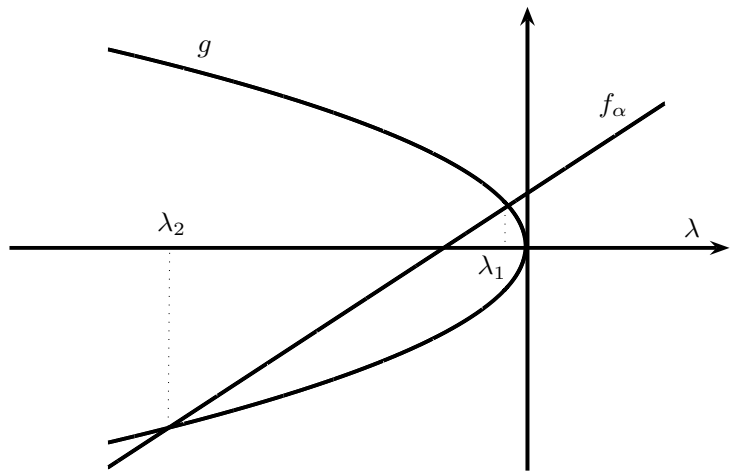}}
{\caption{Plot of the curves of $f_{\alpha}({\lambda})$ and $g({\lambda})$. }}\vspace{0cm}\label{figfg}
\end{figure}
%%%%%%%%%%%%%%%%%%%%%%%%%%%%%%%%%%%%%%%%%%%%%%%%

Fig. 1 shows that when $\alpha$ decreases, it is  clear that the largest abscissa
of the two points of intersection decreases until $f_{\alpha}({\lambda})$ becomes tangent to
$g({\lambda})$. In this case, we have $\lambda_1=\lambda_2$ and as a result $\Delta=0$, which is equivalent to $\mu=0$ or
$\alpha^{2}\mu^{2}+4\alpha-4=0$. If $\mu\neq 0$, then the nonnegative root of $\alpha^{2}\mu^{2}+4\alpha-4=0$  is equal to
\begin{equation}\label{1.18}
\hat{\alpha}=\frac{2}{1+\sqrt{1+\mu^{2}}},
\end{equation}
and we have $\lambda_{1,2}=1-\hat{\alpha}$. Now, if $\mu=0$, then $|\lambda_{1,2}|=|1-\alpha|$. In this case, $\alpha=1$ is the
best choice, because in this case we have $\lambda_{1,2}=0$. On the other hand, if we set $\mu=0$  in Eq. (\ref{1.18}) we would
have $\hat{\alpha}=1$. For $\alpha<\hat{\alpha}$, the quadratic equation (\ref{1.12}) has two conjugate complex zeroes of  modulus
$1-\alpha$, which increases in modulus with decreasing $\alpha$. Thus, for the fixed eigenvalue $\mu$ of $S$, the value $\alpha$,
which minimizes the zero of largest modulus of (\ref{1.12}) is $\hat{\alpha}$. Finally, it is evident that the curve
$g({\lambda})=\pm \sqrt{{-\lambda}}\rho(S)$ is an envelope for all the curves $\pm \sqrt{{-\lambda}}\mu$,
$0\leq \mu\leq \rho(S)$, and we conclude, using the above argument, that
\[
\rho(\mathcal{G}_{\alpha^*})=\min_{0<\alpha<\frac{2}{1+\rho(S)}} \rho(\mathcal{G}_{\alpha})=1-\alpha^*,
\]
where $\alpha^*$ is defined in (\ref{1.15}). \qquad $\Box$
\end{proof}

\begin{corollary}
If $\rho(S)=0$, then according to Eq. (\ref{1.15}) we have $\alpha^*=1$, and therefore  by (\ref{1.16}) we deduce  $\rho(\mathcal{G}_{\alpha^*})=0$. This means that the method would have the highest speed of convergence. In the simplest case that $T=0$, the method converges in one iteration.
\end{corollary}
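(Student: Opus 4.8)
The plan is to read off the first two claims directly from Theorem~\ref{t2} and then inspect the iteration matrix in the degenerate situation. First I would substitute $\rho(S)=0$ into the optimal-parameter formula~(\ref{1.15}): since $\sqrt{1+\rho^{2}(S)}=1$, the denominator equals $2$, so $\alpha^{*}=1$. Feeding this into~(\ref{1.16}) gives $\rho(\mathcal{G}_{\alpha^{*}})=1-\alpha^{*}=0$ immediately. Because a spectral radius is nonnegative, $0$ is the smallest value $\rho(\mathcal{G}_{\alpha})$ could possibly take, which is the sense in which $\alpha^{*}=1$ yields the fastest convergence.

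It is worth observing at this point that, under the standing hypotheses, the condition $\rho(S)=0$ is not a borderline phenomenon but is in fact equivalent to $T=0$. Indeed, by the argument in the proof of Lemma~\ref{l2}, $S=W^{-1}T$ is similar to the symmetric matrix $Z=R^{-T}TR^{-1}$, where $W=R^{2}$ with $R$ symmetric positive definite. If $\rho(S)=0$ then $\rho(Z)=\|Z\|_{2}=0$, hence $Z=0$ and therefore $T=RZR=0$. This explains why the corollary may immediately restrict attention to the case $T=0$.

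Finally, for the one-step convergence statement I would compute $\mathcal{G}_{1}$ explicitly: with $\alpha=1$,
\[
\mathcal{G}_{1}=\pmatrix{I & 0 \cr S & I}^{-1}\pmatrix{0 & S \cr 0 & 0}=\pmatrix{I & 0 \cr -S & I}\pmatrix{0 & S \cr 0 & 0}=\pmatrix{0 & S \cr 0 & -S^{2}},
\]
which vanishes when $T=0$ (equivalently $S=0$). Moreover $\mathcal{P}_{1}=\mathcal{D}-\mathcal{E}=\pmatrix{W & 0 \cr T & W}$ reduces to $\mathcal{D}=\mathcal{A}$ when $T=0$, so the recursion~(\ref{1.7}) with $\alpha=1$ gives $\pmatrix{x^{1}\cr y^{1}}=\mathcal{A}^{-1}\pmatrix{p\cr q}$, the exact solution, regardless of the starting vector. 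I do not anticipate a genuine obstacle here; the only mild subtlety is noticing that $\rho(S)=0$ already forces $T=0$, so that the ``simplest case'' is in fact the only case, together with the short verification that a stationary iteration with zero iteration matrix reaches the solution in a single step.
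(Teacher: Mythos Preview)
Your proposal is correct and follows the same approach as the paper, which simply reads off the conclusions from formulas~(\ref{1.15}) and~(\ref{1.16}); the paper offers no proof beyond what is embedded in the corollary statement itself. Your write-up is in fact more thorough: you supply the explicit verification that $\mathcal{G}_{1}=0$ and that one step of~(\ref{1.7}) lands on the exact solution when $T=0$, and you add the useful observation that $\rho(S)=0$ already forces $T=0$ via the similarity $S\sim R^{-T}TR^{-1}$, so the ``simplest case'' singled out in the paper is in fact the only case. The paper does not make this last point, so your remark sharpens the corollary.
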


\begin{corollary}
Let $W,T \in \Bbb{R}^{n \times n}$ be symmetric positive definite and symmetric positive semidefinite matrices, respectively. Then, from Lemma 2 it can be seen that the eigenvalues of $S$ are all real and nonnegative. Moreover, from Theorem 1  the GSOR method converges if and only if
\[
0<\alpha<\frac{2}{1+\mu_{\max}(S)},
\]
where $\mu_{\max}(S)$ is the largest eigenvalue of $S=W^{-1}T$. Furthermore, by Theorem 2 by replacing $\rho(S)$ by $\mu_{\max}(S)$,  the optimal value of the relaxation parameter and the corresponding optimal convergence factor can be computed via (\ref{1.15}) and (\ref{1.16}), respectively.
\end{corollary}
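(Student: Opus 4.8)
The plan is to reduce everything to the two facts already established: (i) $S=W^{-1}T$ is similar to the symmetric matrix $Z=R^{-T}TR^{-1}$, where $W=R^2$ with $R$ symmetric positive definite (this is exactly the construction in the proof of Lemma~\ref{l2}); and (ii) Theorems~\ref{t1} and~\ref{t2} are stated in terms of $\rho(S)$. So the only genuine new content is to upgrade ``real eigenvalues'' to ``real nonnegative eigenvalues'' under the extra hypothesis that $T$ is positive semidefinite, and then to observe that this makes $\rho(S)=\mu_{\max}(S)$.

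First I would invoke the proof of Lemma~\ref{l2} to write $S$ as similar to $Z=R^{-1}TR^{-1}=R^{-T}TR^{-1}$. Since $R^{-1}$ is nonsingular, $Z$ is congruent to $T$; because $T$ is symmetric positive semidefinite, congruence preserves this (for any $v\neq 0$, $v^{T}Zv=(R^{-1}v)^{T}T(R^{-1}v)\ge 0$), so $Z$ is symmetric positive semidefinite. Hence its eigenvalues, which coincide with those of $S$, are real and nonnegative. Consequently $\rho(S)=\max_i|\mu_i(S)|=\max_i\mu_i(S)=\mu_{\max}(S)$.

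With this identity in hand, the remaining two assertions are immediate: substituting $\rho(S)=\mu_{\max}(S)$ into Theorem~\ref{t1} gives that GSOR converges iff $0<\alpha<2/(1+\mu_{\max}(S))$, and substituting into the formulas (\ref{1.15})--(\ref{1.16}) of Theorem~\ref{t2} yields the stated optimal parameter $\alpha^{*}=2/(1+\sqrt{1+\mu_{\max}^{2}(S)})$ and optimal convergence factor $1-\alpha^{*}$. No step here is an obstacle; the only point requiring a moment's care is the congruence argument that turns positive semidefiniteness of $T$ into nonnegativity of the spectrum of $S$ (as opposed to merely reality, which Lemma~\ref{l2} already gives), and the trivial-but-worth-stating observation that nonnegativity of the eigenvalues is what collapses $\rho(S)$ to $\mu_{\max}(S)$.
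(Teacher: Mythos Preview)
Your proposal is correct and follows exactly the route the paper intends: the paper states this corollary without a separate proof, simply pointing to Lemma~\ref{l2} (for reality/nonnegativity of the eigenvalues of $S$) and to Theorems~\ref{t1}--\ref{t2} (for the convergence interval and optimal parameter), and you have correctly filled in the one detail left implicit---namely the congruence argument showing $Z=R^{-1}TR^{-1}$ is positive semidefinite, which upgrades ``real'' to ``real nonnegative'' and forces $\rho(S)=\mu_{\max}(S)$.
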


It is noteworthy that, if $W$ and $T$ are  symmetric positive semidefinite and symmetric positive definite, respectively, then
the GSOR iteration method can be applied for the  equivalent real system that is obtained from $-iAx=-ib$ ($i=\sqrt{-1}$). More
generally, if there exist real numbers $\beta$ and $\delta$ such that both matrices $\widetilde{W}:= \beta W+ \delta T$ and
$\widetilde{T}:= \beta T - \delta W$ are symmetric positive semidefinite with at least one of them positive definite, we can
first multiply both sides of (\ref{1.1}) by the complex number $\beta-i\delta$ to get the equivalent system
\[
(\widetilde{W} + i\widetilde{T})x=\widetilde{b}\quad \rm{with} \quad  \widetilde{{\it b}}:=(\beta-i\delta){\it b},
\]
and then employ the GSOR iteration method to the  equivalent real
system that is obtained from the above system.

\section{Numerical experiments}\label{SEC4}

In this section, we use three examples of \cite{MHSS} and an example of \cite{LPHSS}    to illustrate the feasibility and effectiveness of the GSOR iteration method when it is employed either as a solver or as a preconditioner for GMRES to solve the equivalent real system (\ref{1.6}). In all the examples, $W$ is symmetric positive definite  and $T$ is symmetric positive semidefinite.  We also compare the performance of the GSOR method with that of the MHSS method, from point of view of both the number of iterations (denoted by IT) and the total computing times (in seconds, denoted by CPU). In each iteration of both the MHSS and GSOR iteration methods, we use the Cholesky factorization of the coefficient matrices to solve the sub-systems. The reported CPU times  are the sum of the CPU times for the convergence of the method and the CPU times for computing the Cholesky factorization. It is necessary to mention that to solve symmetric positive definite system of linear equations we have used the sparse Cholesky factorization incorporated with the symmetric approximate minimum degree reordering \cite{Saad}. To do so we have used the \verb"symamd.m" command of M{\small ATLAB} Version 7.

All the numerical experiments  were computed in double precision using some  M{\small ATLAB} codes  on a Pentium 4 Laptop, with a 2.10 GHz CPU
and 1.99GB of RAM. We use a null vector as an initial guess and the  stopping criterion
\[
\frac{\|b-Au^{(k)}\|_{2}}{\| b\|_{2}}<10^{-6},
\]
is always used where $u^{(k)}=x^{(k)}+iy^{(k)}.$

%------------------------- Example 1 ------------------------------

\begin{example} (See \cite{MHSS})\label{ex1}
Consider the linear system of equations (\ref{1.1}) as following
\begin{equation}\label{1.19}
\left[\left(K+\frac{3-\sqrt{3}}{\tau}I\right)+i\left(K+\frac{3+\sqrt{3}}{\tau}I\right)\right]x=b,
\end{equation}
where $\tau$ is the time step-size and $K$ is the five-point centered difference matrix approximating the negative Laplacian operator $L=-\Delta$ with homogeneous Dirichlet boundary conditions, on a uniform mesh in the unit square
$[0, 1]\times[0,1]$ with the mesh-size $h=1/(m+1)$. The matrix $K\in\mathbb{R}^{n\times n}$ possesses the tensor-product form
$K=I\otimes V_{m}+V_{m}\otimes I$, with $V_{m}=h^{-2}{\rm tridiag}(-1,2,-1)\in \mathbb{R}^{m\times m}$. Hence, $K$ is an
${n\times n}$ block-tridiagonal matrix, with $n=m^{2}$. We take
\[
W=K+\frac{3-\sqrt{3}}{\tau}I \quad {\rm and} \quad T=K+\frac{3-\sqrt{3}}{\tau}I,
\]
and the right-hand side vector $b$ with its $j$th entry $b_{j}$ being given by
\[
b_{j}=\frac{(1-i)j}{\tau(j+1)^{2}},\quad  j=1,2,\ldots,n.
\]
In our tests, we take $\tau=h$. Furthermore, we normalize coefficient matrix and right-hand side by multiplying both by $h^{2}$.
\end{example}

%------------------------- Example 2 ------------------------------

\begin{example}(See \cite{MHSS})\label{ex2}
Consider the linear system of equations (\ref{1.1}) as following
\[
\left[(-\omega^{2}M+K )+i(\omega C_{V}+C_{H}) \right]x=b,
\]
where $M$ and $K$ are the inertia and the stiffness matrices, $C_{V}$ and $C_{H}$ are the viscous and the hysteretic
damping matrices, respectively, and $\omega$ is the driving circular frequency. We take $C_{H}=\mu K$ with $\mu$ a damping coefficient, $M=I$, $C_{V}=10I$, and $K$ the five-point centered difference matrix approximating the negative Laplacian operator with homogeneous Dirichlet boundary conditions, on a uniform mesh in the unit square $[0, 1]\times[0, 1]$ with the mesh-size $h=1/(m+1)$. The matrix $K\in \mathbb{R}^{n\times n}$ possesses the tensor-product form $K=I\otimes V_{m}+V_{m}\otimes I$, with $V_{m}=h^{-2}{\rm tridiag}(-1,2,-1)\in \mathbb{R}^{m\times m}$. Hence, $K$ is an ${n\times n}$ block-tridiagonal matrix, with $n=m^{2}$. In addition, we set $\omega=\pi$, $\mu=0.02$, and the right-hand side vector $b$ to be $b=(1 + i)A{\bf 1}$, with ${\bf 1}$ being the vector of all entries equal to $1$. As before, we normalize the system by multiplying both sides
through by $h^{2}$.
\end{example}

%------------------------- Example 3 ------------------------------

\begin{example}(See \cite{MHSS})\label{ex3}
Consider the linear system of equations $(W+iT)x=b$, with
\[
T=I\otimes V+V\otimes I \quad {\rm and} \quad W=10(I\otimes V_{c}+V_{c}\otimes I)+9(e_{1}e_{m}^{T}+e_{m}e_{1}^{T})\otimes I,
\]
where $V={\rm tridiag}(-1,2,-1)\in \mathbb{R}^{m\times m}$, $V_{c}=V-e_{1}e_{m}^{T}-e_{m}e_{1}^{T}\in \mathbb{R}^{m\times m}$ and $e_{1}$  and $e_{m}$  are the first and last unit vectors in $\mathbb{R}^{m}$, respectively. We take the right-hand side vector $b$ to be $b=(1 + i)A\textbf{1}$, with $\textbf{1}$ being the vector of all entries equal to $1$.

Here $T$ and $W$ correspond to the five-point centered difference matrices approximating the negative Laplacian operator with homogeneous Dirichlet boundary conditions and periodic boundary conditions, respectively, on a uniform mesh in the unit square $[0, 1]\times[0, 1]$ with the mesh-size $h=1/(m+1)$.
\end{example}

%------------------------- Example 4 ------------------------------

\begin{example}\label{ex4} (See \cite{Bertaccini,LPHSS})
We consider the complex Helmholtz equation
\[
-\triangle u+\sigma_1 u + i \sigma_2 u =f,
\]
where $\sigma_1$ and $\sigma_2$ are real coefficient functions,  $u$ satisfies Dirichlet boundary conditions
in $D = [0,1] \times [0, 1]$ and $i=\sqrt{-1}$.
We discretize the problem with finite differences on a $m\times m$ grid with mesh size $h = 1/(m + 1)$.  This leads to a system of linear equations
\[
\left((K+\sigma_1 I)+i \sigma_2 I\right)x=b,
\]
where $K=I\otimes V_{m}+V_{m}\otimes I$ is the discretization of $-\triangle$  by means of centered differences, wherein $V_{m}=h^{-2}{\rm tridiag}(-1,2,-1)\in \mathbb{R}^{m\times m}$. The right-hand side vector $b$ is taken to
be $b=(1 + i)A\textbf{1}$, with $\textbf{1}$ being the vector of all entries equal to $1$.
Furthermore, before solving the system we normalize the coefficient matrix and the right-hand side vector by multiplying both by $h^{2}$. For the numerical tests we set $\sigma_1=\sigma_2=100$.
\end{example}

In Table \ref{Table1}, we  have reported the optimal values of the parameter $\alpha$ (denoted by $\alpha^{*}$)  used in both the MHSS and the GSOR iterative methods for different values of  $m$ for the four examples. The optimal  parameters $\alpha^{*}$ for the MHSS method are those presented in \cite{MHSS} (except for $m=512$). The $\alpha^{*}$ for the GSOR method is obtained from (\ref{1.15}) in which the largest eigenvalue of matrix $S$ $(\mu_{\max}(S))$  has been  estimated by a few iterations of the power method.

In Fig. 2 the optimal parameter  $\alpha^*$ for the GSOR method versus some values of  $m$ has been displayed. From Table \ref{Table1} and Fig. 1 we see that for all the examples $\alpha^*$ decreases with  the mesh-size $h$. We also see that for large values of $m$ the value $\alpha^*$ for Examples \ref{ex2} and \ref{ex4} is approximately equal to 0.455 and 0.862, respectively. For Examples \ref{ex1}
we see that $\alpha^*$ is roughly reduced by a factor of 0.96, and for Example \ref{ex3} by a factor of 0.55 as $m$ is doubled.

\begin{figure}[!hbp]
\centerline{\includegraphics[height=5cm,width=6cm]{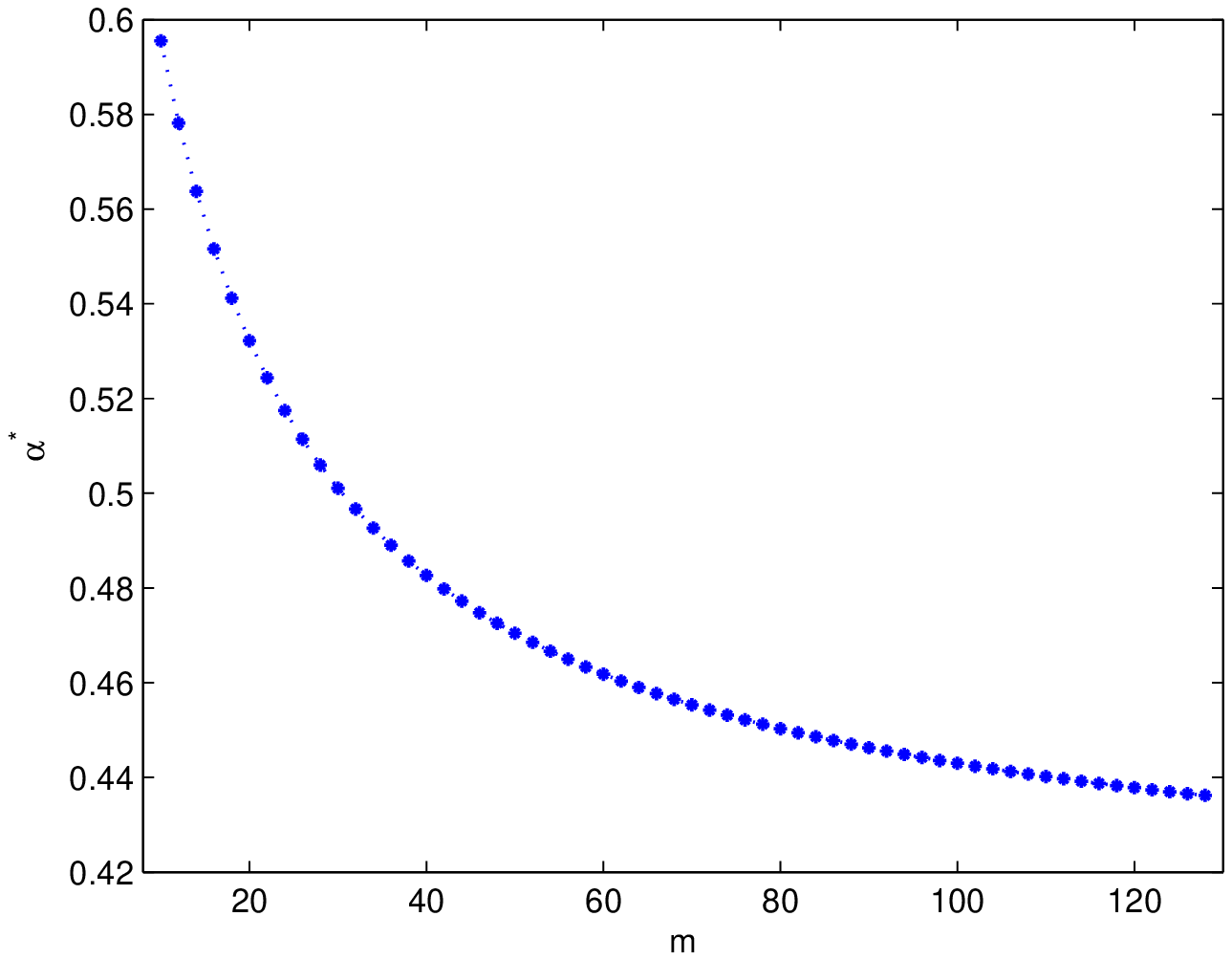}\includegraphics[height=5cm,width=6cm]{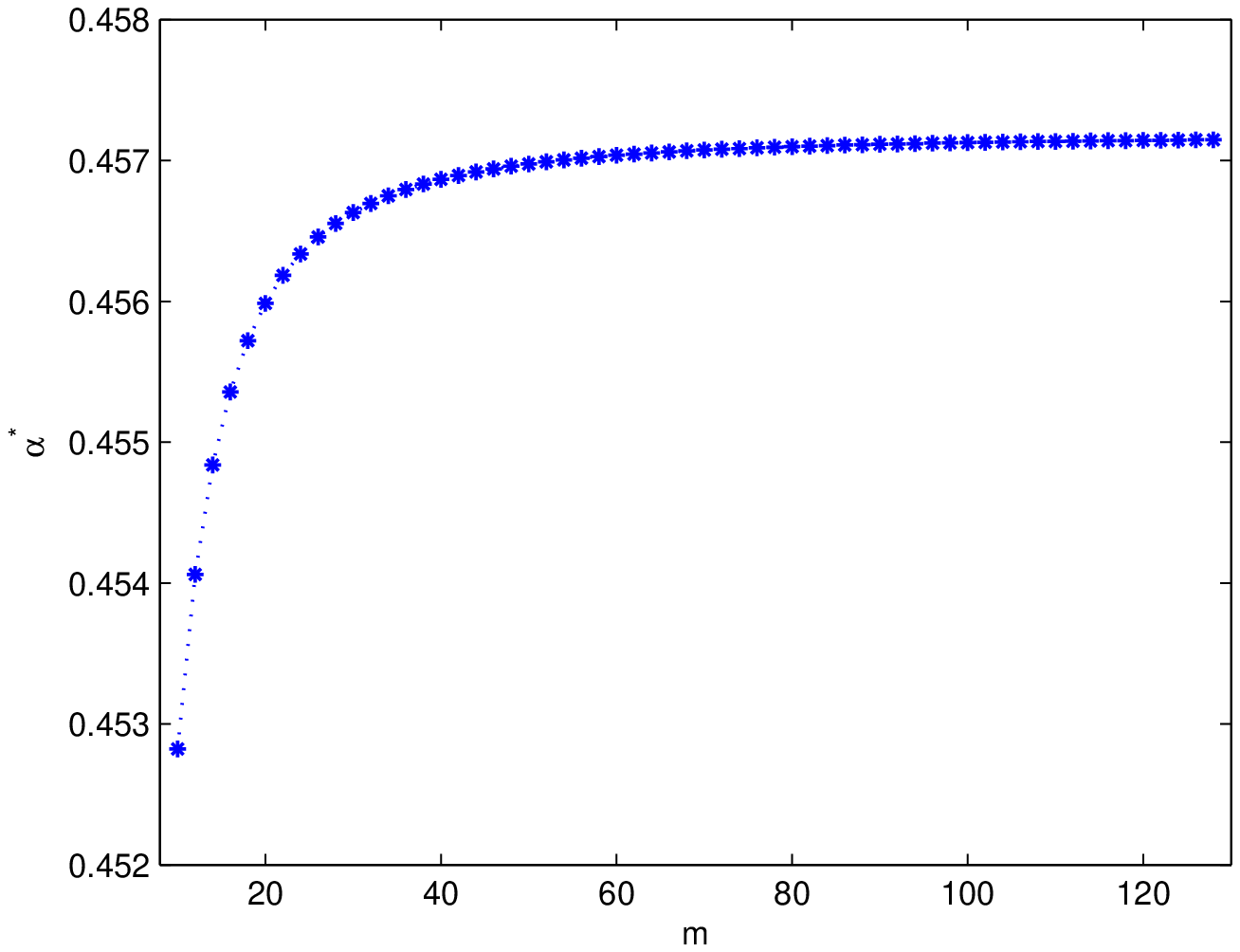}}
\centerline{\includegraphics[height=5cm,width=6cm]{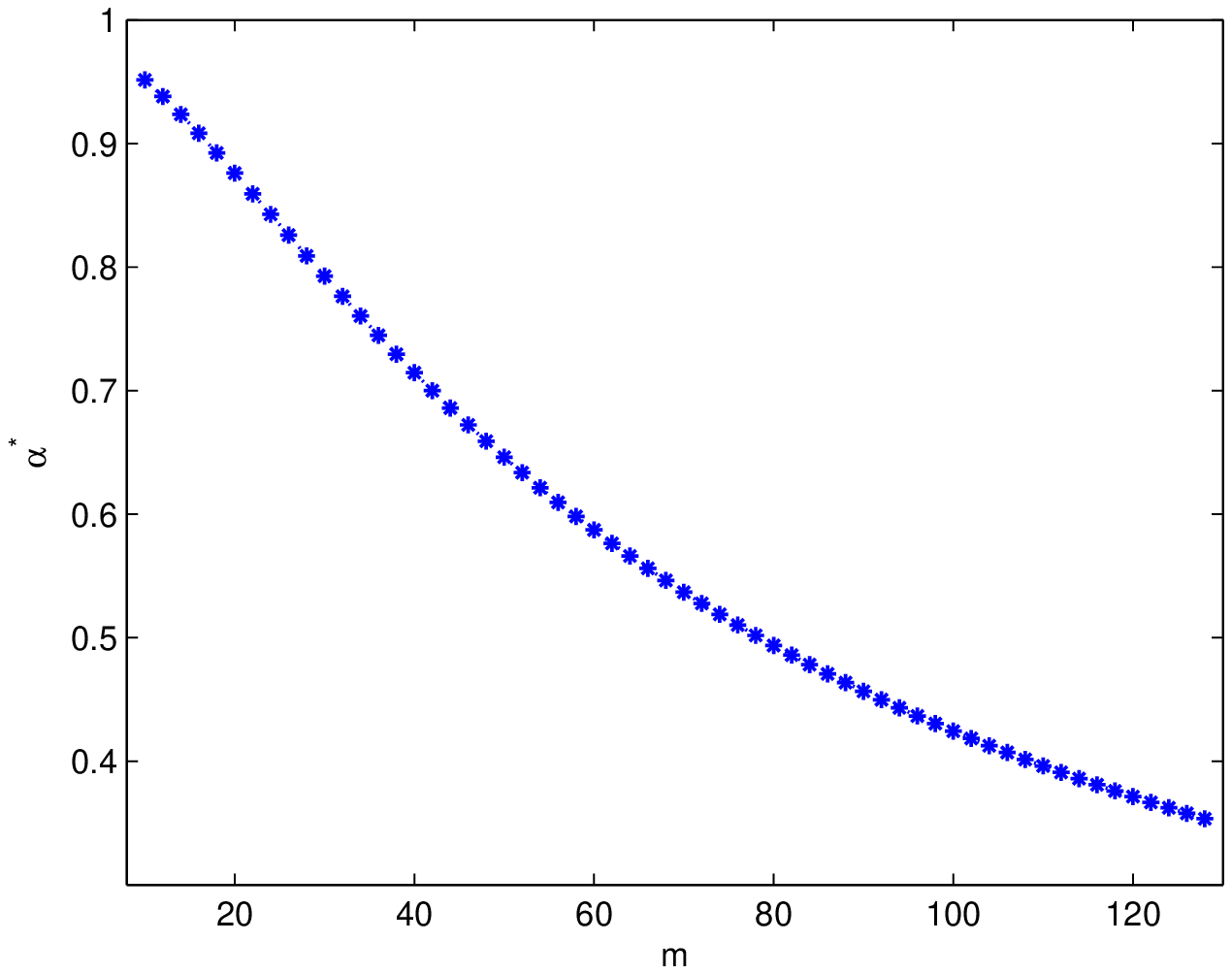}\includegraphics[height=5cm,width=6cm]{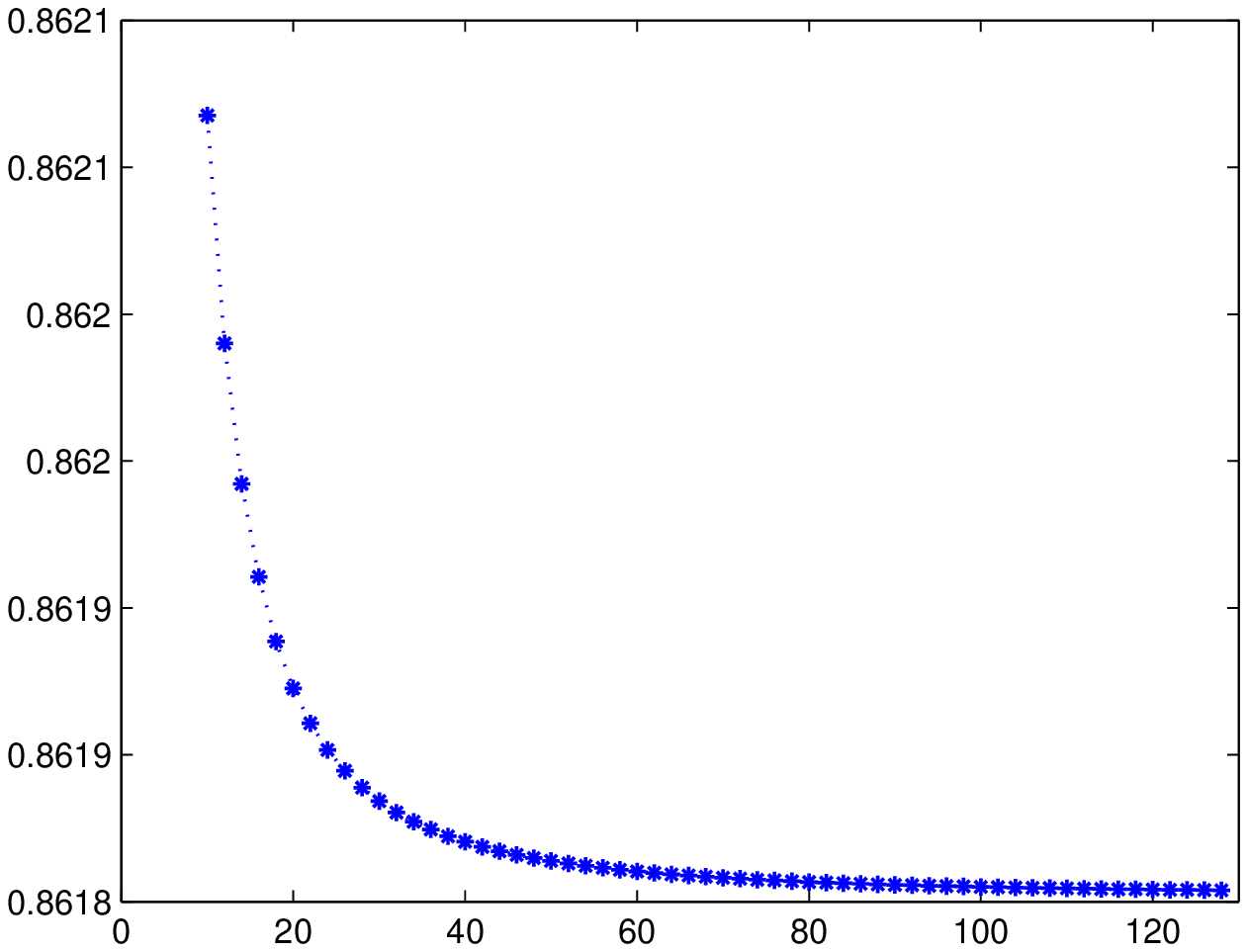}}
{\caption{The optimal parameter  $\alpha^*$ for the GSOR method versus some values of  $m$; top-left: Example 1, top-right:Example 2, down-left :Example 3, down-right: Example 4.}}
\label{Figro}
\end{figure}

%%%%%%%%%%%%%% TABLE  1  optimal parameter %%%%%%%%%%%%%%%%%

 \begin{table}\label{Table1}
 \caption{ The optimal
parameters $\alpha^{*}$ for MHSS and GSOR iteration
methods.\label{Table1}} \vspace{-.2cm}
\begin{tabular}{lllllllll}\vspace{-0.2cm}\\ \hline \vspace{-0.3cm} \\  %\cline{3-5}
Example   & Method  & Grid\\\cline{3-8}  \vspace{-0.3cm}
\\\vspace{0.3cm}

          &         & $16\times 16$  & $32\times 32$ & $64\times 64$ & $128\times 128$ & $256\times 256$ &
          $512\times 512$ \vspace{-0.3cm}\\  \hline \vspace{-0.3cm} \\ [0mm]

No. 1    &  MHSS   & $1.06$        & $0.75$        & $0.54$        & $0.40$    & $0.30$  & $0.21$ \\
         &  GSOR   & $0.550$       & $0.495$       & $0.457$       & $0.432$   & $0.428$ & $0.412$ \\[2mm]

No. 2    &  MHSS   & $0.21$        & $0.08$        & $0.04$        & $0.02$    & $0.01$  & $0.005$\\
         &  GSOR   & $0.455$       & $0.455$       & $0.455$       & $0.455$   & $0.455$ & $0.457$ \\[2mm]

No. 3    &  MHSS   & $1.61$        & $1.01$        & $0.53$        & $0.26$    & $0.13$  & $0.07$\\
         &  GSOR   & $0.908$       & $0.776$       & $0.566$       & $0.353$   & $0.199$ & $0.105$\\[2mm]

No. 4    &  MHSS   & $0.37$        & $0.09$        & $0.021$       & $0.005$   & $0.002$  & $0.0005$\\
         &  GSOR   & $0.862$       & $0.862$       & $0.862$       & $0.862$   & $0.862$ & $0.862$\\ \hline

\end{tabular}
\end{table}
%%%%%%%%%%%%%%%%%%%%%%%%%%%%%%%%%%%%%%%%%%%%%%%%%%%%%%%%%%%%%%%%%%
%%%%%%%%%%%%%%%%%%%%%%%%%%%%%%%%%%%%%%%%%%%%%%%%%%%%%%%%%%%%%%%
In Figs. 3-6 we depict the eigenvalues distribution of the coefficient matrix $\mathcal{A}$ and the GSOR($\alpha^{*}$)-preconditioned matrix $\mathcal{P}_{\alpha^{*}}^{-1}\mathcal{A}$ with $m=32$, respectively, for Examples 1-4. It is evident that system which is preconditioned by GSOR method is of a well-clustered spectrum around $(1,0)$. These observations imply that when GSOR is applied as a preconditioner for GMRES, the rate of convergence can be improved considerably. This fact is further  confirmed by the numerical results presented in Tables 2-5.

%%%%%%%%%%%%%% FIg  1,2,2  %%%%%%%%%%%%%%%%%
\begin{figure}[!hbp]
\centerline{\includegraphics[height=5cm,width=6cm]{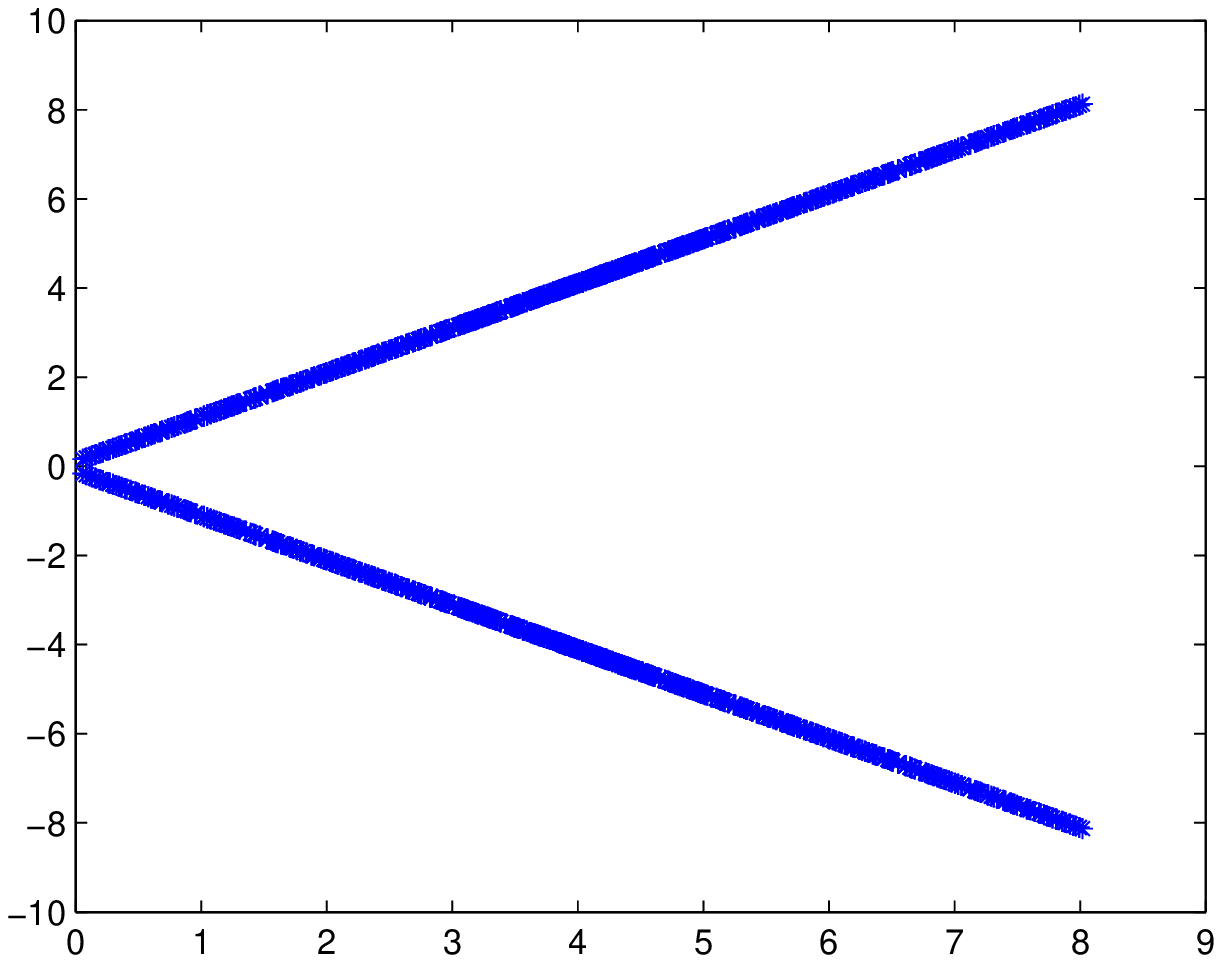}\includegraphics[height=5cm,width=6cm]{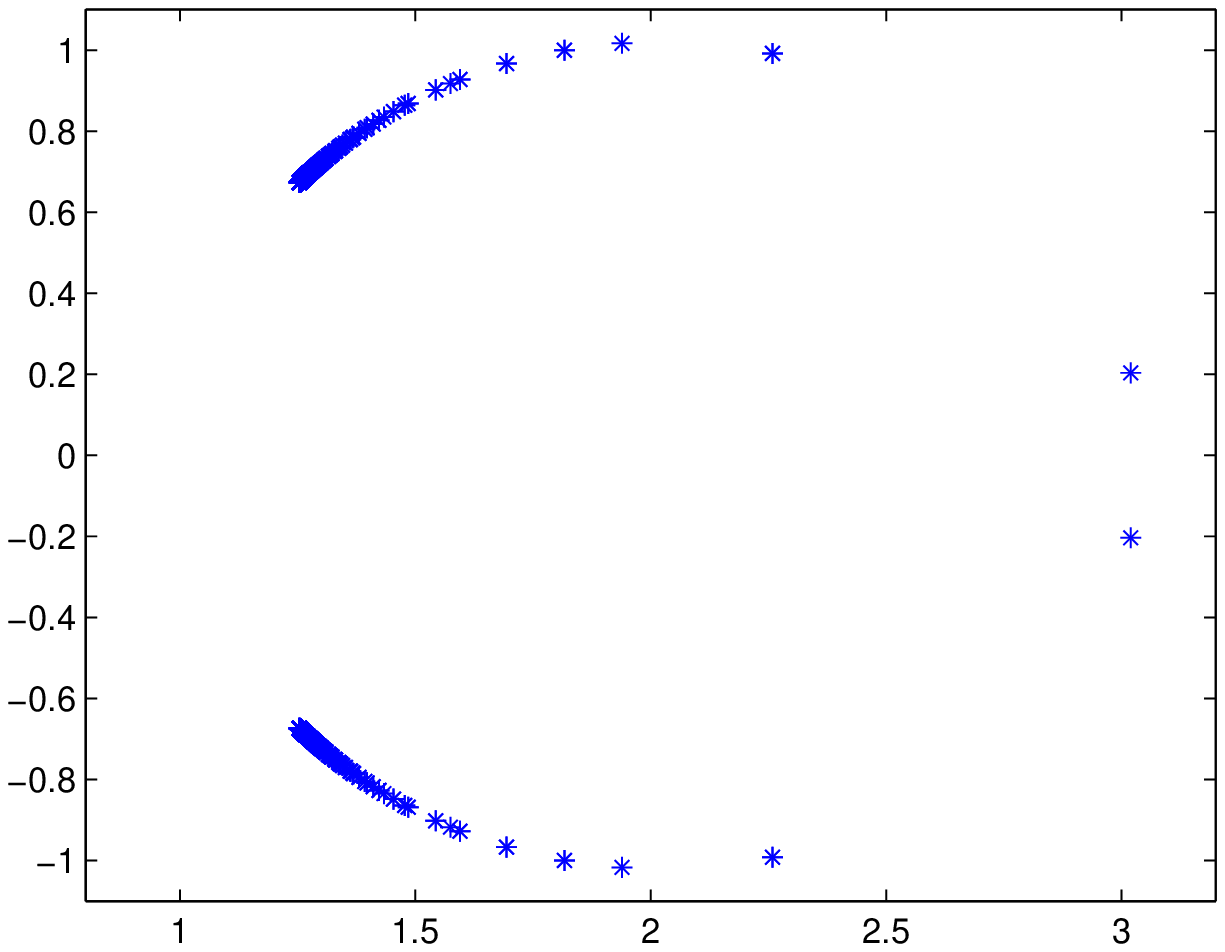}}
{\caption{Eigenvalues distribution of  the original matrix
$\mathcal{A}$ (left) and the preconditioned matrix $\mathcal
P_{\alpha^{*}}^{-1}\mathcal{A}$ (right) for Example
\ref{ex1} with $m=32$. }}\label{Fig1}\vspace{0cm}
\end{figure}

\begin{figure}[!hbp]
\centerline{\includegraphics[height=5cm,width=6cm]{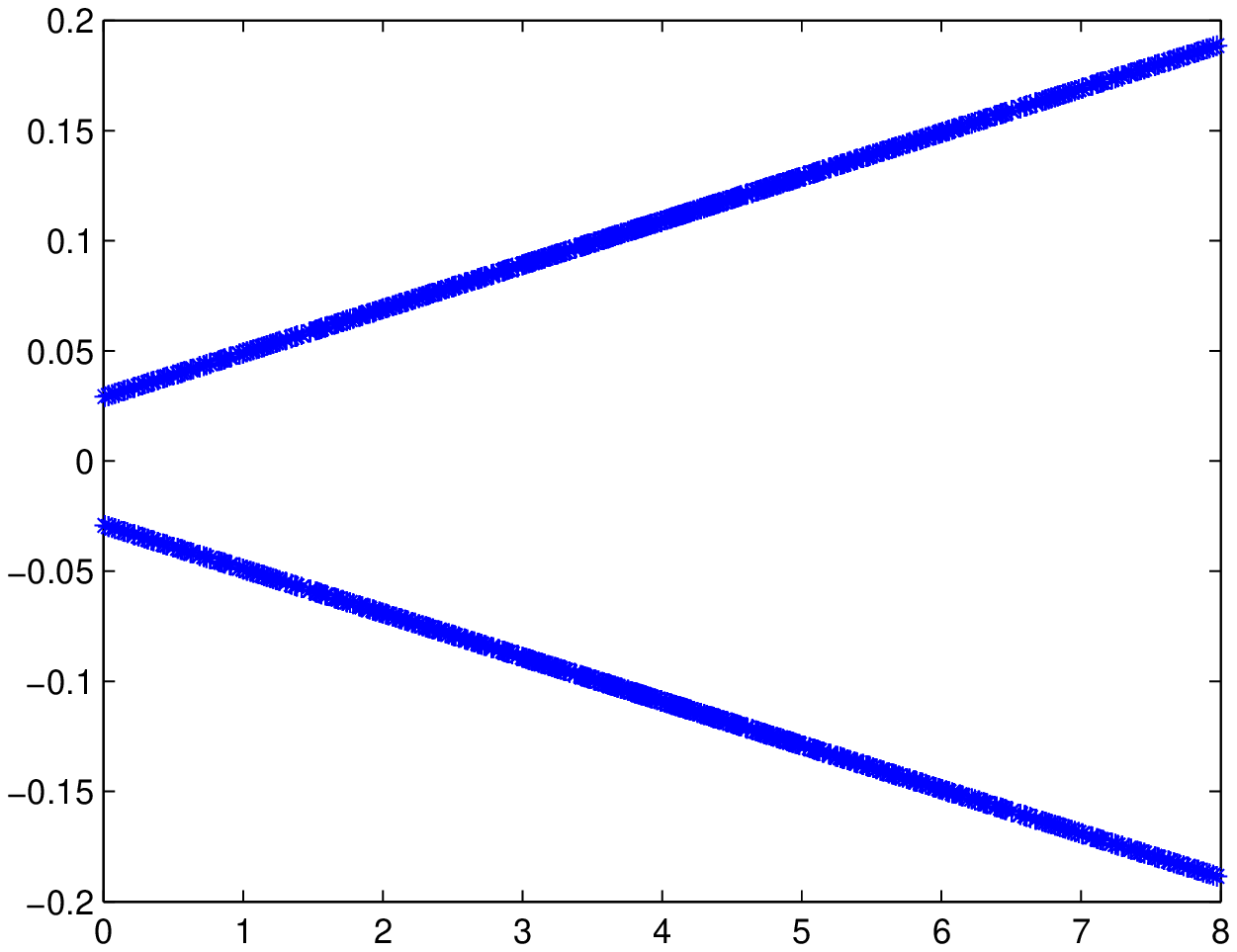}\includegraphics[height=5cm,width=6cm]{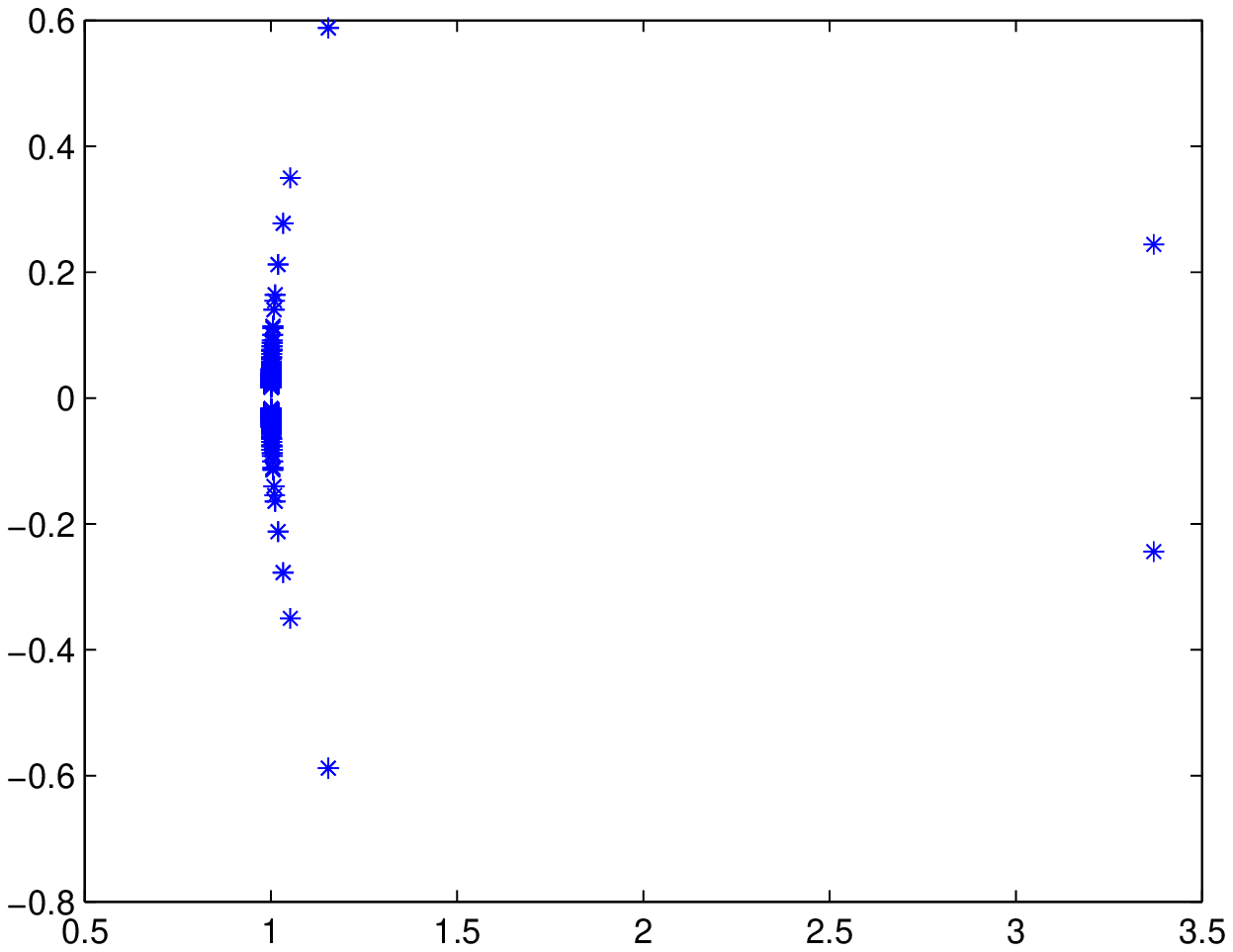}}
{\caption{Eigenvalues distribution of  the original matrix $\mathcal{A}$ (left) and the preconditioned matrix $\mathcal P_{\alpha^{*}}^{-1}\mathcal{A}$ (right) for Example \ref{ex2} with $m=32$. }}\label{Fig2}\vspace{0cm}
\end{figure}

\begin{figure}[!hbp]
\centerline{\includegraphics[height=5cm,width=6cm]{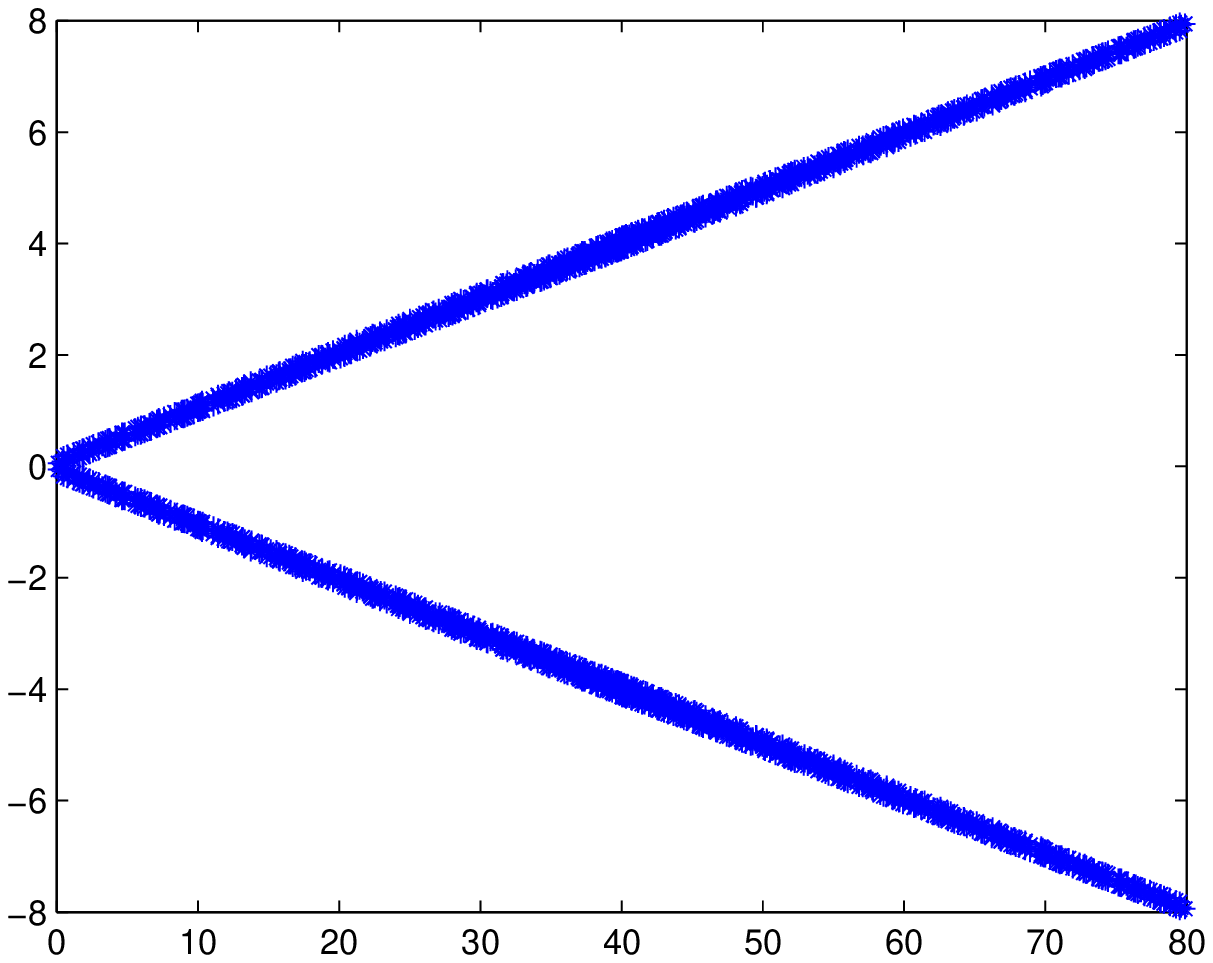}\includegraphics[height=5cm,width=6cm]{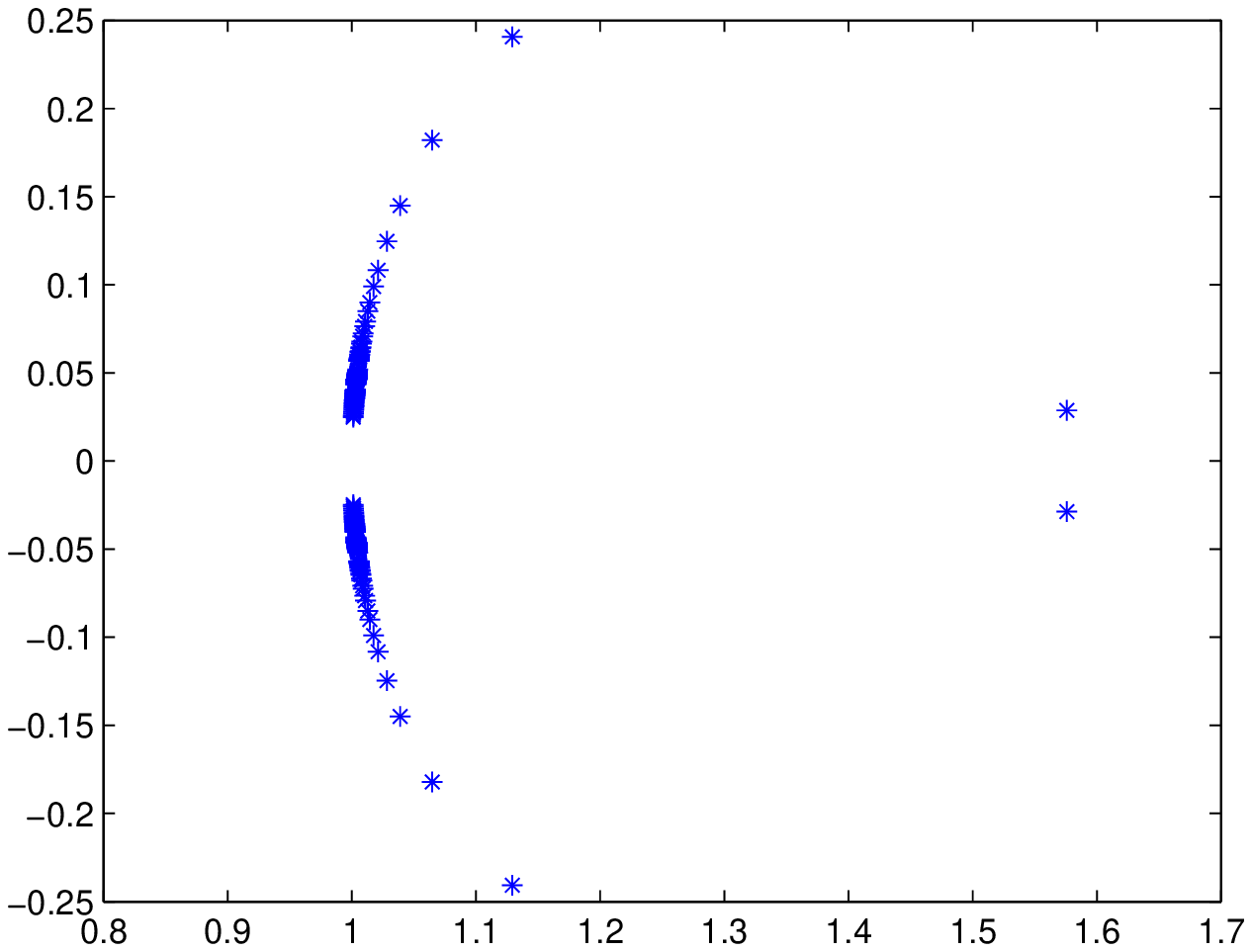}}
{\caption{Eigenvalues distribution of  the original matrix $\mathcal{A}$ (left) and the preconditioned matrix $\mathcal
P_{\alpha^{*}}^{-1}\mathcal{A}$ (right) for Example \ref{ex3} with $m=32$. }}\label{Fig3}\vspace{0cm}
\end{figure}

\begin{figure}[!hbp]
\centerline{\includegraphics[height=5cm,width=6cm]{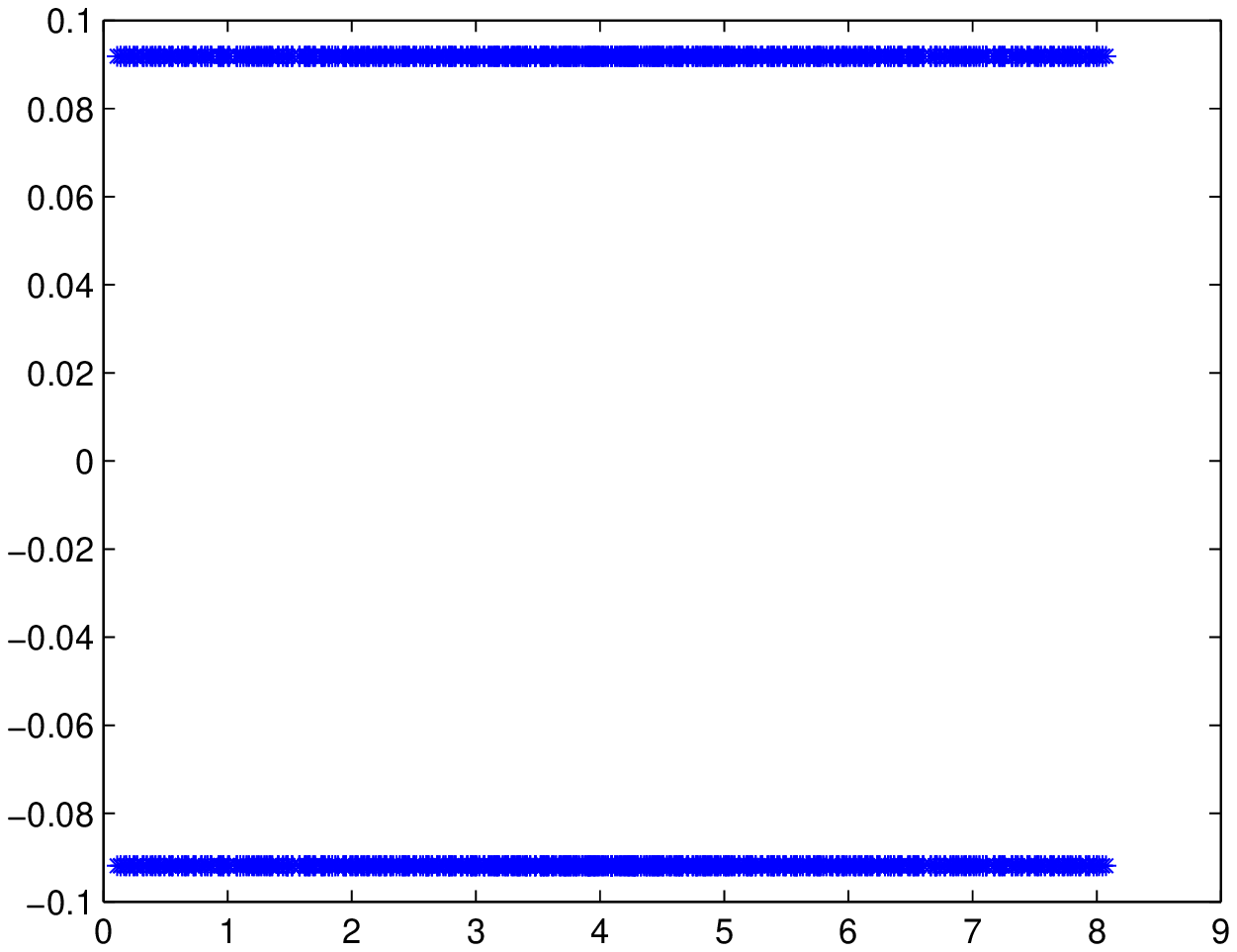}\includegraphics[height=5cm,width=6cm]{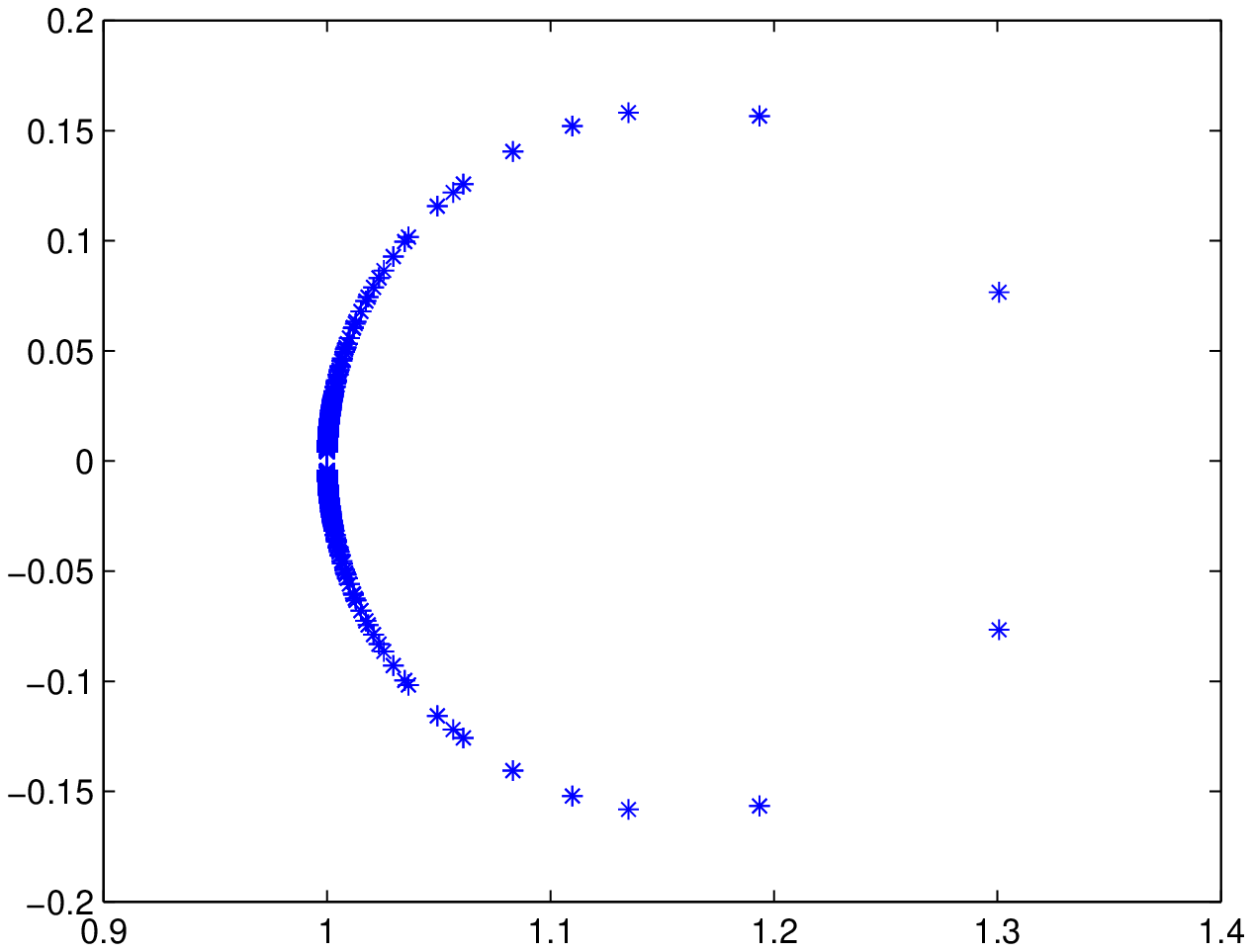}}
{\caption{Eigenvalues distribution of  the original matrix $\mathcal{A}$ (left) and the preconditioned matrix $\mathcal
P_{\alpha^{*}}^{-1}\mathcal{A}$ (right) for Example \ref{ex4} with $m=32$. }}\label{Fig4}\vspace{0cm}
\end{figure}

%%%%%%%%%%%%%%%%%%%%%%%%%%%%%%%%%%%%%%%%%%%%%%%%%%%%%%%%%%%%%%%%%%

Numerical results for Example \ref{ex1}  are listed in Table \ref{Table2}. This table presents IT and CPU times  for the MHSS, GSOR, GMRES(10), GSOR-preconditioned GMRES(10) methods. The MHSS iteration is employed to solve the original complex system (\ref{1.1}) and the three other methods
 to solve the equivalent real system (\ref{1.6}). As seen, the GSOR method is superior to the MHSS method in terms of both iterations and CPU times. As a preconditioner, we observe that  GMRES(10) in conjunction with the GSOR method drastically reduces the number of iterations of the GMRES(10) method. Nevertheless CPU times for the GSOR-preconditioned GMRES(10) is slightly greater than those of the GSOR method.

%%%%%%%%%%%%%% TABLE  2 , NUMERICAL RESULTS FOR EXAMPLE 1 %%%%%%%%%%%%%%%%%

 \begin{table}
 \caption{ Numerical results for Example \ref{ex1}. \label{Table2}}
\vspace{-.2cm}
\begin{tabular}{lllllllll}\vspace{-0.2cm}\\ \hline \vspace{-0.3cm} \\  %\cline{3-5}

Method    &  $m\times m$ & $16\times 16$ & $32\times
32$&$64\times 64$ & $128\times 128$ & $256\times 256$ & $512\times 512$\vspace{-0.0cm}\\
\hline \vspace{-0.3cm} \\ \vspace{0.0cm}

MHSS              & IT  & $40$   & $54$   &$73$    & $98$    & $133$    &  181  \\\vspace{0.2cm}
                  & CPU & $0.11$ & $0.27$ & $1.53$ & $9.66$  & $65.22$  &  546.69  \\\vspace{0.0cm}

GSOR              & IT  & $19$   & $22$   &$24$    & $26$    & $27$     &  27  \\\vspace{0.2cm}
                  & CPU & $0.05$ & $0.08$ & $0.31$ & $1.47$  & $7.19$   &  44.70  \\\vspace{0.0cm}

GMRES($10$)       & IT  & $44$   & $93$   & $163$  & $288$   & $526$    &  974  \\\vspace{0.2cm}
                  & CPU & $0.08$ & $0.48$ & $3.27$ & $24.73$ & $222.66$ &  2755.23  \\\vspace{0.0cm}

GSOR-GMRES($10$)  & IT  & $3$    & $3$    & $3$    &   $4$   & $4$      &  4 \\
                  & CPU & $0.06$ & $0.13$ & $0.44$ &  $2.42$ & $10.97$  & 79.33  \\\hline

\end{tabular}
\end{table}
%%%%%%%%%%%%%%%%%%%%%%%%%%%%%%%%%%%%%%%%%%%%%%

In Table \ref{Table3}, we show numerical results for Example
\ref{ex2}. In this table, a dagger $(\dag)$ means that the method fails to converge in 2000 iterations. As seen, the GSOR iteration method is more effective than the MHSS iteration method in terms of both iterations and CPU
times. Even with the increase of problem size, we  see that the
number of GMRES(10) iterations with the GSOR preconditioner remain
almost constant and are significantly less than those of the GMRES(10) method. Hence, the GSOR
preconditioner can significantly improve the convergence
behavior of GMRES(10).
%%%%%%%%%%%%%% TABLE  3 , NUMERICAL RESULTS FOR EXAMPLE 2 %%%%%%%%%%%%%%%%%

 \begin{table}
 \caption{ Numerical results for Example \ref{ex2}. \label{Table3}}
\vspace{-.2cm}
\begin{tabular}{lllllllll}\vspace{-0.2cm}\\ \hline \vspace{-0.3cm} \\  %\cline{3-5}

Method    &  $m\times m$ & $16\times 16$  & $32\times 32$  &$64\times 64$ & $128\times 128$ &
$256\times 256$ & $512\times 512$ \vspace{-0.0cm}\\  \hline \vspace{-0.3cm} \\

MHSS              & IT  & $34$    & $38$    &$50$     &$81$     & $139$   & 250 \\\vspace{0.2cm}
                  & CPU & $0.06$  & $0.20$  & $1.28$  & $8.032$ & $68.25$ & 746.49 \\\vspace{0.0cm}

GSOR              & IT  & $26$    & $24$    &$24$     & $23$    & $23$    & 23 \\\vspace{0.2cm}
                  & CPU & $0.05$  & $0.08$  & $0.33$  & $1.31$  & $6.23$  & 38.83 \\\vspace{0.0cm}

GMRES($10$)       & IT  & $23$    & $117$   & $228$   & $670$   & $\dag$  & $\dag$ \\   \vspace{0.2cm}
                  & CPU & $0.08$  & $0.64$  & $4.53$  & $58.88$ & $--$    & $--$ \\    \vspace{0.0cm}

GSOR-GMRES($10$)  & IT  & $2$     & $2$     & $2$     & $2$     & $2$     & 2 \\
                  & CPU & $0.06$  & $0.11$  & $0.31$  & $1.31$  & $6.25$  & 37.14 \\\hline

\end{tabular}
\end{table}
%%%%%%%%%%%%%%%%%%%%%%%%%%%%%%%%%%%%%%%%%%%%%%

Numerical results for Example \ref{ex3} are presented in Table \ref{Table4}. In terms of the iteration steps, GSOR-preconditioned GMRES(10) performs much better than GSOR, MHSS and GMRES(10). In  terms of computing times,  GSOR-preconditioned GMRES(10) costs less CPU than MHSS and  GMRES(10) and also less CPU than GSOR expect  for $m=32$ and $m=64$. Hence, we find that as a preconditioner for GMRES(10), the GSOR is of  high performance, especially when problem size increases.

%%%%%%%%%%%%%% TABLE  4 , NUMERICAL RESULTS FOR EXAMPLE 3 %%%%%%%%%%%%%%%%%

 \begin{table}
 \caption{ Numerical results for Example \ref{ex3}. \label{Table4}}
\vspace{-.2cm}
\begin{tabular}{llllllll}\vspace{-0.2cm}\\ \hline \vspace{-0.3cm} \\  %\cline{3-5}

Method    &  $m\times m$ & $16\times 16$  & $32\times 32$ &$64\times 64$ & $128\times 128$ &
$256\times 256$ & $512\times 512$ \vspace{-0.0cm}\\  \hline \vspace{-0.3cm} \\

MHSS              & IT  & $53$    & $76$   &$130$   &$246$    & $468$    & $869$ \\\vspace{0.2cm}
                  & CPU & $0.06$  & $0.39$ & $3.01$ & $26.72$ & $245.26$ & $2792.10$ \\\vspace{0.0cm}

GSOR              & IT  & $7$     & $11$   &$20$    & $35$    & $71$     & $131$ \\\vspace{0.2cm}
                  & CPU & $0.06$  & $0.07$ & $0.31$ & $2.28$  & $20.39$  & $241.49$ \\\vspace{0.0cm}

GMRES($10$)       & IT  & $19$    & $49$   & $91$   & $316$   & $1081$   & $\dag$ \\\vspace{0.2cm}
                  & CPU & $0.06$  & $0.25$ & $1.88$ & $27.11$ & $459.95$ & $--$ \\\vspace{0.0cm}

GSOR-GMRES($10$)  & IT  & $2$     & $2$    & $2$    & $3$     & $4$      & $8$ \\
                  & CPU & $0.05$  & $0.09$ & $0.34$ & $2.16$  & $13.73$  & $161.14$ \\\hline

\end{tabular}
\end{table}
%%%%%%%%%%%%%%%%%%%%%%%%%%%%%%%%%%%%%%%%%%%%%%

%%%%%%%%%%%%%% TABLE  5 , NUMERICAL RESULTS FOR EXAMPLE 4 %%%%%%%%%%%%%%%%%

 \begin{table}
 \caption{Numerical results for Example \ref{ex4}. \label{Table5}}
\vspace{-.2cm}
\begin{tabular}{llllllll}\vspace{-0.2cm}\\ \hline \vspace{-0.3cm} \\  %\cline{3-5}

Method    &  $m\times m$ & $16\times 16$  & $32\times 32$ &$64\times 64$ & $128\times 128$ &
$256\times 256$ & $512\times 512$ \vspace{-0.0cm}\\  \hline \vspace{-0.3cm} \\

MHSS              & IT  & $30$    & $36$   &$39$    &$40$     & $41$     & $41$ \\\vspace{0.2cm}
                  & CPU & $0.05$  & $0.14$ & $0.55$ & $2.69$  & $16.5$   & $80.95$ \\\vspace{0.0cm}

GSOR              & IT  & $8$     & $8$    &$8$     & $8$     & $7$      & $7$ \\\vspace{0.2cm}
                  & CPU & $0.03$  & $0.06$ & $0.11$ & $0.531$  & $2.50$  & $17.63$ \\\vspace{0.0cm}

GMRES($10$)       & IT  & $5$     & $12$   & $24$   & $66$    & $219$    & $761$ \\\vspace{0.2cm}
                  & CPU & $0.05$  & $0.13$ & $0.36$ & $4.55$ & $99.53$   & $1616.88$ \\\vspace{0.0cm}

GSOR-GMRES($10$)  & IT  & $2$     & $2$    & $2$    & $2$     & $2$      & $2$ \\
                  & CPU & $0.04$  & $0.08$ & $0.31$ & $1.52$  & $8.48$  & $42.67$ \\\hline

\end{tabular}
\end{table}
%%%%%%%%%%%%%%%%%%%%%%%%%%%%%%%%%%%%%%%%%%%%%%

In Table \ref{Table5}, numerical results of Example \ref{ex4} are presented. All of the comments and observations which we have given for the previous examples can also be posed here. As a preconditioner we see that the GSOR preconditioner drastically reduces the iteration numbers of the GMRES(10) method. For example, the GMRES(10) converges in 761 iterations, while the GMRES(10) in conjunction with the GSOR preconditioner converges only in 2 iterations. In addition, the MHSS method can not compete with the GSOR method in terms of iterations and CPU times.

\section{Conclusion}\label{SEC5}

In this paper we have utilized the generalized successive overrelaxation (GSOR) iterative method to solve the equivalent real formulation of complex linear system (\ref{1.6}), where $W$ is symmetric positive definite and $T$ is symmetric positive semidefinite. Convergence properties of the method have been also investigated. Besides its use as a solver, the GSOR iteration has also been used as a preconditioner to accelerate Krylov subspace methods such as GMRES. Some numerical have been presented to show the effectiveness of the method. Our numerical examples show that our method is quite suitable for such problems. Moreover, the presented numerical experiments show that the GSOR method is superior to MHSS in terms of the iterations and CPU times.

\section*{Acknowledgments}

The authors  are grateful to the anonymous referees and the editor of the journal for their valuable comments and suggestions.

%% The Appendices part is started with the command \appendix;
%% appendix sections are then done as normal sections
%% \appendix

%% \section{}
%% \label{}

%\begin{acknowledgements}
%If you'd like to thank anyone, place your comments here
%and remove the percent signs.
%\end{acknowledgements}

% BibTeX users please use one of
%\bibliographystyle{spbasic}      % basic style, author-year citations
%\bibliographystyle{spmpsci}      % mathematics and physical sciences
%\bibliographystyle{spphys}       % APS-like style for physics
%\bibliography{}   % name your BibTeX data base

% Non-BibTeX users please use

\end{document}